\newcommand{\bbR}{\mathbb{R}}
\newcommand{\norm}[1]{\left\lVert #1 \right\rVert}
\newcommand{\bbQ}{\mathbb{Q}}
\newcommand{\less}{\mathord{<}}
\newcommand{\restrict}{\upharpoonright}
\newcommand{\HC}{\mathsf{HC}}
\newcommand{\NHC}{\mathsf{NHC}}
\newtheorem{theorem}{Theorem}[section]
\newtheorem{lemma}[theorem]{Lemma}
\newtheorem{proposition}[theorem]{Proposition}
\theoremstyle{definition}
\newtheorem{definition}[theorem]{Definition}
\newtheorem{remark}[theorem]{Remark}
\newtheorem*{MA}{Martin's Axiom}
\theoremstyle{remark}
\newtheorem*{claim*}{Claim}
\author{Konstantinos A. Beros
\\
{\small Miami University}
\and
Paul B. Larson
\thanks{We thank Rebecca Sanders for introducing us to the problem that originally motivated this paper, and Sophie Grivaux and Etienne Matheron for comments on earlier drafts of this paper.  The research of the second author is supported in part by NSF grants DMS-1201494 and DMS-1764320.}
\\
{\small Miami University}
}
\title{Common hypercyclic vectors for unilateral weighted shifts on $\ell^2$}
\date{}
\begin{document}

\maketitle

\begin{abstract}
Each $w \in \ell^\infty$ defines a bounded linear operator $B_w : \ell^2 \rightarrow \ell^2$ where $B_w (x) (i) = w(i) \cdot x(i+1)$ for each $i \in \omega$. A vector $x \in \ell^2$ is {\em hypercyclic} for $B_w$ if the set 
$\{ B_w^k (x) : k \in \omega\}$ of forward iterates of $x$ is dense in $\ell^2$.  For each such $w$, the set $\HC (w)$ consisting of all vectors hypercyclic for $B_w$ is $G_{\delta}$. The set of \emph{common hypercyclic vectors} for a set $W \subseteq \ell^\infty$ is the set $\HC^* (W) = \bigcap_{w \in W} \HC (w)$. We show that $\HC^* (W)$ can be made arbitrarily complicated by making $W$ sufficiently complex, and that even for a $G_\delta$ set $W$ the set $\HC^* (W)$ can be non-Borel.

Finally, by assuming the Continuum Hypothesis or Martin's Axiom, we are able to construct a set $W$ such that $\HC^* (W)$ does not even have the property of Baire.
\end{abstract}

\section{Introduction}

Given a separable Banach space $X$ and a linear operator $T : X \rightarrow X$, one says that an element $x \in X$ is a {\em hypercyclic vector} for $T$ iff the forward iterates of $x$ under $T$ form a dense subset of $X$.  That is, $x$ is hypercyclic for $T$ iff the set
\[
D = \{ T^n (x) : n \in \omega\}
\]
is dense in $X$.  In the context of the present paper, $X$ will always be the Hilbert space $\ell^2$.  It is worth noting that if $X$ has no isolated points, then $x \in X$ is hypercyclic for $T$ iff the set $D$ above has {\em infinite} intersection with every open subset of $X$.

The study of hypercyclic vectors is quite old, dating back to work of Birkhoff from the 1920s and McClane from the 1950s.  Somewhat more recently, hypercyclicity was studied in the context of {\em scaled unilateral shifts} on $\ell^2$, i.e., operators $T_\lambda : \ell^2 \rightarrow \ell^2$ of the form
\[
T_\lambda (x) (i) = \lambda x(i+1)
\]
for a fixed $\lambda \in \mathbb C$.  In \cite{rolewicz}, Rolewicz showed that the maps $T_\lambda$ all posess hypercyclic vectors provided $|\lambda| > 1$.  In general, the hypercyclic vectors of $T_\lambda$ (for $|\lambda| > 1$) form a dense $G_\delta$ subset of $\ell^2$.  It thus follows from the Baire category theorem that any countable family of such $T_\lambda$ posess common hypercyclic vectors.  Expanding upon this observation, Abakumov and Gordon \cite{abakumov-gordon} showed in 2003 that the family 
\[
\{ T_\lambda : |\lambda|>1 \}
\]
has common hypercyclic vectors.  Notice that this result does not follow from the Baire category theorem as the family of operators in question is uncountable.  They remark that they do not know of other examples of uncountable families of operators with common hypercyclic vectors.  

We take up a similar study in the setting of unilateral weighted shifts.  Given a bounded sequence $w \in \ell^\infty$, the associated {\em unilateral weighted shift} is the operator $B_w : \ell^2 \rightarrow \ell^2$ defined by
\[
B_w (x) (i) = w(i) x(i+1)
\]
The sequence $w \in \ell^\infty$ is referred to as a {\em weight sequence}.  Note that a scaled shift as described above is simply a weighted shift with constant weight sequence.  Let $\HC (w)$ denote the set of hypercyclic vectors for $B_w$ and, given a set $W \subseteq \ell^\infty$, let
\[
\HC^* (W) = \bigcap_{w \in W} \HC (w)
\]
be the set of hypercyclic vectors common to all $B_w$ for $w \in W$.  It is routine to check that $\HC (w)$ is a $G_\delta$ set for any $w \in \ell^\infty$.  If $W$ is countable, then $\HC^* (W)$ will also be $G_\delta$.  Moreover, if $W \subseteq \ell^\infty$ is countable and each $\HC (w)$ is nonempty (for $w \in W$), then (by Lemma~\ref{dense if nonempty}) $\HC^* (W)$ is also nonempty.

Things become more interesting when $W$ is uncountable.

In the first place, $\HC^* (W)$ may be empty even if each $\HC (w) \neq \emptyset$ (for $w \in W$).  For instance, it is not hard to show (Lemma~\ref{infly many twos}) that if $w$ is a sequence of ones and twos with infinitely many twos, then $\HC (w)$ is always nonempty.  On the other hand, if $W$ is the collection of all such 1,2-sequences, then $\HC^* (W)$ is empty.  Indeed, given any $y \in \ell^2$, there is a $w \in W$ such that $y \notin \HC (w)$: simply make the twos of $w$ sufficiently sparse that 
\[
\prod_{i < n} w(i) = 2^k
\]
only if $\lVert y \upharpoonright [n,\infty)\rVert_2 < 2^{-k}$.  This will guarantee that, for all but finitely many $n$, the norm $\lVert B_w^n (y) \rVert_2$ is less than $1$ and hence $y \notin \HC (w)$.  

Moreover, whereas $\HC^* (W)$ is $G_\delta$ if $W$ is countable, by allowing $W$ to be uncountable, $\HC^* (W)$ may be made arbitrarily complex, e.g., not $G_\delta$ or non-Borel (Theorem~\ref{T1}).  Exactly what ``complex'' means in this context will be explained below.  Although arbitrary complexity cannot be achieved when $W$ is Borel, the set $\HC^*(W)$ can still be made non-Borel even for $G_\delta$ sets $W$.  Taking this one step further, by assuming the Continuum Hypothesis (or Martin's Axiom), a set $W$ can constructed so that $\HC^* (W)$ fails to have the Baire property (Theorem ~\ref{non baire under ch}).

A by-product of these results is the construction of a large class of examples of uncountable sets $W$ for which $\HC^* (W)$ is nonempty.  This addresses Abakumov and Gordon's comment about the lack of examples of uncountable families of operators with common hypercyclic vectors.

Before proceeding to the precise statements of our results, it is necessary to introduce a few preliminaries and some terminology from descriptive set theory.  

As is common in descriptive set theory, the setting is that of separable completely metrizable spaces (i.e., {\em Polish spaces}), e.g., $\mathbb R$, $\mathbb C$ or $\ell^2$.  More typically, however, one studies spaces which are countable powers (equipped with the product topology), i.e., sequence spaces of the form
\[
A^\omega = \{ \big( a(i) \big)_{i = 0}^\infty : (\forall i) (a(i) \in A)\}
\]
where $A$ is some fixed Polish space (often a finite discrete space).  In the present setting two spaces will be of particular interest: $\mathbb R^\omega$ (the space of sequences of real numbers) and $\{ 1 , 2 \}^\omega$ (the spaces of 1,2-sequences).  The former will give us an overlying structure for the spaces $\ell^\infty$ and $\ell^2$, while the latter will be the source of our weight sequences $w$ as we study the phenomenon of hypercyclicity.

Next we must make precise what we mean by ``complexity'' in a topological space.  In this context, the key descriptive set theoretic concept is that of a {\em pointclass}.  There are many variations on the definition of the term ``pointclass''.  For the purposes of the present work, we will use the following definition.

\begin{definition}
A {\em pointclass} $\Gamma$ is a collection of subsets of Polish (separable completely metrizable) spaces such that
\begin{itemize}
	\item $\Gamma$ is closed under continuous preimages,
	\item $\Gamma$ is closed under finite unions and
	\item $\Gamma$ is closed under finite intersections.
\end{itemize}
Given a pointclass $\Gamma$, the {\em dual pointclass} $\bar \Gamma$ consists of those $Y$ contained in some Polish space $X$ such that $X \setminus Y \in \Gamma$.  A pointclass is {\em non-self-dual} iff there exist a Polish space $X$ and a set $Y \subseteq X$ such that $Y \in \Gamma$ but $Y \notin \bar \Gamma$ (equivalently, $X \setminus Y \notin \Gamma$).
\end{definition}

To take a few examples, ``closed'' and ``open'' are dual pointclasses as are ``$F_\sigma$'' and ``$G_\delta$''.  All four of these classes are non-self-dual.  Pointclasses are used as a measure of complexity in set theory.  For instance, open and closed sets are regarded as ``simple''.  Sets which are strictly $F_\sigma$, $G_\delta$, $G_{\delta \sigma}$, etc., are regarded as more complicated.  In general, the number of iterated intersections and unions required to build a set from the open and closed sets is the quantifiable measure of complexity in descriptive set theory.

This sort of analysis only applies to the Borel sets.  In moving beyond the Borel sets, the natural next step is the analytic and co-analytic sets.  A set is {\em analytic} if it is the continuous image of a Borel set.  A set is {\em co-analytic} if its complement is analytic.  (Classically, analytic sets were known as {\em Souslin sets}.)  It is a standard result in descriptive set theory (Souslin's theorem) that a set which is both analytic and co-analytic is actually Borel.  (See 14.11 in Kechris \cite{kechris dst}.)

An equivalent (and more useful) formulation says that a subset $S$ of a Polish space $X$ is analytic iff there exists a Polish space $Y$ and a Borel set $B \subseteq X \times Y$ such that
\[
S = \{ x \in X : (\exists y ) ((x,y) \in B)\}.
\]
Likewise, a set $P \subseteq X$ is co-analytic iff it has the form
\[
P = \{ x \in X : (\forall y) ((x,y) \in B\}
\]
where $B \subseteq X \times Y$ is again Borel.  See section 14A of Kechris \cite{kechris dst} for an introduction to analytic sets and the relationship between the various ways of describing them.

In light of this, we may now establish an upper bound on the complexity of $\HC^* (W)$ when $W$ is analytic.

\begin{proposition}\label{hc* is co-analytic}
If $W \subseteq \ell^\infty$ is analytic in the product topology on $\mathbb R^\omega$, the common hypercyclic vectors $\HC^* (W)$ form a co-analytic set.
\end{proposition}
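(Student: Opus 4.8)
The plan is to realize $\HC^*(W)$ as a single existential real quantifier (on the complement side) applied to an analytic matrix, and then invoke the closure of the analytic sets under projection. I would work inside the Polish product space $\ell^2 \times \mathbb{R}^\omega$, where $\ell^2$ carries its norm topology and $\mathbb{R}^\omega$ the product topology, and study the relation
\[
R = \{(x,w) \in \ell^2 \times \mathbb{R}^\omega : x \in \HC(w)\},
\]
where for $w \notin \ell^\infty$ the defining formula below does the sensible thing; this will not matter since $W \subseteq \ell^\infty$.

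The first and main step is to check that $R$ is Borel. For fixed $k \in \omega$ and coordinate $i \in \omega$, the $i$-th coordinate of $B_w^k(x)$ is
\[
c_{k,i}(x,w) = \Big(\prod_{j<k} w(i+j)\Big)\, x(i+k).
\]
The map $x \mapsto x(i+k)$ is continuous on $\ell^2$ (it is a bounded linear functional), and $w \mapsto \prod_{j<k} w(i+j)$ depends on finitely many coordinates and is therefore continuous on $\mathbb{R}^\omega$; hence each $c_{k,i}$ is jointly continuous. Fixing $y$ in a countable dense set $D \subseteq \ell^2$, the function
\[
g_{k,y}(x,w) = \norm{B_w^k(x) - y}^2 = \sum_{i} \big|c_{k,i}(x,w) - y(i)\big|^2
\]
is an increasing pointwise limit of its continuous partial sums, hence lower semicontinuous with values in $[0,\infty]$ (the value $\infty$ occurs only for unbounded $w$), and in particular Borel. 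The standard characterization of density then yields
\[
R = \bigcap_{y \in D}\ \bigcap_{m \ge 1}\ \bigcup_{k} \big\{(x,w) : g_{k,y}(x,w) < m^{-2}\big\},
\]
exhibiting $R$ as a countable Boolean combination of Borel sets, hence Borel. For $w \in \ell^\infty$ this formula agrees with ``$x \in \HC(w)$'', since $B_w$ is then a genuine bounded operator and the density characterization is valid.

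With $R$ in hand, I would compute the complexity on the complement. Observe that
\[
x \notin \HC^*(W) \iff \exists w\, \big(w \in W \ \wedge\ (x,w) \notin R\big).
\]
Since $W$ is analytic and $R$ is Borel, the set $\{(x,w) : w \in W \wedge (x,w) \notin R\}$ is the intersection of an analytic set with a Borel set, hence analytic; projecting out $w$ (existential quantification over the Polish space $\mathbb{R}^\omega$) keeps it analytic, using the equivalent formulation of analyticity recalled above. Thus $\ell^2 \setminus \HC^*(W)$ is analytic, i.e., $\HC^*(W)$ is co-analytic, as claimed.

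I expect the only genuine work to be the verification that $R$ is Borel, and within that the one delicate point is the clash of the two topologies: $w$ lives in $\mathbb{R}^\omega$ with the (comparatively weak) product topology, while membership in $\HC(w)$ is a statement about the norm topology on $\ell^2$. The joint continuity of the coordinate functions $c_{k,i}$ and the consequent lower semicontinuity of $g_{k,y}$ are precisely what bridge this gap. One should also note explicitly that permitting unbounded $w$ (so that $g_{k,y}$ may equal $\infty$) causes no harm, because $W$ is assumed to lie in $\ell^\infty$, where $B_w$ is bounded and the orbit lies in $\ell^2$.
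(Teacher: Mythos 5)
Your proposal is correct and is essentially the paper's argument: both express $\ell^2 \setminus \HC^*(W)$ as the projection along $w$ of the intersection of the analytic set $\ell^2 \times W$ with the (Borel) relation ``$x \notin \HC(w)$,'' and then invoke closure of analytic sets under intersection with Borel sets and under projection. The one substantive difference is in how the complexity of the relation $\{(x,w) : x \in \HC(w)\}$ is handled: the paper simply asserts it is $G_\delta$, whereas you verify Borelness via lower semicontinuity of $(x,w) \mapsto \lVert B_w^k(x) - y\rVert_2^2$. Your extra care is warranted here, since with the product topology on the $w$-coordinate the map $(x,w) \mapsto B_w^k(x)$ is \emph{not} jointly continuous over $\ell^\infty$ (perturbing $w$ in late coordinates, which is invisible to the product topology, can move $\lVert B_w^k(x)\rVert_2$ by a fixed amount), so the natural decomposition only yields an $F_{\sigma\delta}$ bound rather than $G_\delta$; joint $G_\delta$-ness does hold when the weights are restricted to a uniformly bounded set such as $\{1,2\}^\omega$ (the only case the paper subsequently uses), but your weaker Borel bound is both easier to justify in the stated generality and all that the quantifier count requires.
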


\begin{proof}
To see this, observe that, for $y \in \ell^2$,
\[
y \in \bigcap_{w \in W} \HC (w) \iff (\forall w \in \ell^\infty) (w \in W \implies y \in \HC (w)).
\]
Thus,  $\bigcap_{w \in W} \HC (w)$ is co-analytic since the relation
\[
P (y,w) \iff y \in \HC (w)
\]
is itself $G_\delta$ and the $\mathbf \Sigma^1_1$ relation ``$w \in W$'' is in the hypothesis of the conditional statement above.
\end{proof}

It is worth remarking that the Borel structure of $\ell^\infty$ (as a Banach space) is fundamentally different from the inherited structure of $\mathbb R^\omega$.  In fact, since the cellularity of $\ell^\infty$ is $\mathfrak c$ (the cardinality of the real numbers), it is possible to construct sets which are closed in $\ell^\infty$, but not even Borel in $\mathbb R^\omega$.  Therefore, when considering the topological complexity of sets of weight sequences in $\ell^\infty$, we will use the Borel structure of $\mathbb R^\omega$.  In fact, in all of our constructions, the weight sequences used are in $W \subseteq \{ 1 , 2 \}^\omega$ (a compact Polish space).

%\begin{proposition}
%If $W \subseteq \ell^\infty$ is $K_\sigma$, then $\bigcap_{w \in W} \HC (W)$ is Borel.
%\end{proposition}

Our main results show in a very strong way that the upper bound from the last proposition cannot be improved.  Moreover, they establish that the sets $\HC^* (W)$ can be made arbitrarily complex (in the descriptive set theoretic) sense.  This answers a question posed to us by Rebecca Sanders and which served as the original motivation for this paper: are there sets of weight sequences $W$ for which $\HC^* (W)$ is non-$G_\delta$?

\begin{theorem}\label{T1}
Given a non-self-dual pointclass $\Gamma$ which contains the closed sets, there is a set $W \subseteq \{1,2\}^\omega$ such that $\bigcap_{w \in W} \HC (w)$ is not in $\Gamma$.
\end{theorem}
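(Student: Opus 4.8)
The plan is to exhibit a continuous reduction of a set that is too complex for $\Gamma$ into $\HC^*(W)$. Concretely, I will produce a zero-dimensional Polish space $Z$, a continuous injection $\iota\colon Z\to\ell^2$, and a choice of $W\subseteq\{1,2\}^\omega$ so that $\iota^{-1}(\HC^*(W))=A$ for a prescribed set $A\subseteq Z$ with $A\notin\Gamma$. Since $\Gamma$ is closed under continuous preimages, $\HC^*(W)\in\Gamma$ would force $A=\iota^{-1}(\HC^*(W))\in\Gamma$, a contradiction; hence $\HC^*(W)\notin\Gamma$. Such an $A$ exists because $\Gamma$ is non-self-dual: some $Y$ lies in $\Gamma\setminus\bar\Gamma$, so its complement lies in $\bar\Gamma\setminus\Gamma$ and in particular is not in $\Gamma$. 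As $\Gamma$ contains the closed sets, this witness can be transported to a convenient $Z$ (e.g.\ $Z=\cantor$) by a routine continuous reduction, yielding $A\subseteq Z$ with $A\notin\Gamma$.

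Next I rewrite the target. Writing $\NHC(w)=\ell^2\setminus\HC(w)$ (an $F_\sigma$ set) and $K_w=\iota^{-1}(\NHC(w))$, continuity of $\iota$ gives $\iota^{-1}(\HC^*(W))=Z\setminus\bigcup_{w\in W}K_w$, where each $K_w$ is $F_\sigma$ in $Z$. Thus it suffices to arrange $\bigcup_{w\in W}K_w=Z\setminus A$. Here I exploit that $W$ is allowed to be an arbitrary (non-definable) subset of $\{1,2\}^\omega$: since singletons are closed, every subset of $Z$ is a union of singletons, so it is enough to realize each singleton as some $K_w$. That is, the whole theorem reduces to the following \emph{point-killing lemma}: for every $z_0\in Z$ there is a weight $w_{z_0}\in\{1,2\}^\omega$ with $\iota^{-1}(\NHC(w_{z_0}))=\{z_0\}$, i.e.\ $y_{z_0}:=\iota(z_0)\notin\HC(w_{z_0})$ while $\iota(z)\in\HC(w_{z_0})$ for every $z\neq z_0$. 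Granting this, I set $W=\{w_{z_0}:z_0\in Z\setminus A\}$, whence $\bigcup_{w\in W}K_w=Z\setminus A$ and $\iota^{-1}(\HC^*(W))=A$, exactly as required.

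It remains to build $\iota$ and the weights $w_{z_0}$. First I would choose the embedding so that the members of the family $\{y_z\}$ are spread across orthogonal blocks of coordinates at rapidly increasing scales, so that the amount of product-growth a weight must supply in order to render $y_z$ hypercyclic depends on $z$ in an essentially independent, block-by-block fashion. The weight $w_{z_0}$ would then be built from the sparse-twos mechanism of the introduction (Lemma~\ref{infly many twos} and the discussion following it): placing the twos of $w_{z_0}$ so sparsely, relative to the tail norms dictated by the block pattern of $y_{z_0}$, that $\lVert B_{w_{z_0}}^n(y_{z_0})\rVert_2<1$ for all but finitely many $n$, forcing $y_{z_0}\notin\HC(w_{z_0})$, while keeping enough twos in the right places that every other $y_z$ retains a dense orbit. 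I expect the main obstacle to be precisely this two-sided control. Since $\iota$ is continuous, $y_z\to y_{z_0}$ as $z\to z_0$, so the meager $F_\sigma$ set $\NHC(w_{z_0})$ must isolate the single point $y_{z_0}$ of the perfect family $\{y_z\}$; sparseness alone kills an entire downward-closed band of tail-decays, so the embedding must be engineered so that the \emph{pattern} of $y_{z_0}$, not merely its tail magnitude, is what triggers the suppression, allowing arbitrarily nearby $y_z$ to survive. Verifying that a single sparse-twos weight can be tuned to this one pattern while simultaneously sparing all other (uncountably many) members of the family is the technical heart of the argument; once it is in place, the theorem follows from the reduction above.
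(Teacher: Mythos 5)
Your reduction framework is sound, and it is essentially the same skeleton as the paper's own proof: the paper's Lemma~\ref{perfect set lemma} produces a perfect set $P \subseteq \ell^2$ and a map $f$ with $x \in \HC(f(y)) \iff x \neq y$ for all $x,y \in P$, which is precisely your ``point-killing lemma'' with $Z = P$, $\iota$ the inclusion, and $w_{z_0} = f(z_0)$; the concluding pullback argument (a set in $\Gamma$ would trace out a $\bar\Gamma \setminus \Gamma$ set on the copy of Cantor space) is then identical. Both you and the paper gloss over the transport of a non-self-duality witness into Cantor space, so that is not the issue.

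The genuine gap is that you never prove the point-killing lemma, and that lemma is the entire mathematical content of the theorem; everything else is bookkeeping with preimages and complements. You explicitly defer ``verifying that a single sparse-twos weight can be tuned to this one pattern while simultaneously sparing all other (uncountably many) members of the family,'' and your sketch stays at the level of desiderata. The obstruction you correctly identify --- that sparseness of $2$'s suppresses every vector whose tail decays at least as fast as $y_{z_0}$'s, a whole downward-closed band, so nearby members of the family must be spared for a structural rather than metric reason --- is exactly what the paper has to work to overcome, and it does so by two mechanisms, neither of which appears in your sketch. In the main proof, the suppression remains magnitude-based ($f(y)$ places its $2$'s at positions $i_{n,y}$ where $\lVert y \restrict [i_{n,y},\infty)\rVert_2 < 2^{-1-n}$), and the survival of the other points is arranged not by pattern but by a Baire-category fusion: a Cantor scheme $(P_s, Q_s)$ is threaded through dense open sets $D_n$ with $\bigcap_n D_n \subseteq \HC$, requiring $P_s \times Q_t \subseteq D_n$ for distinct $s,t \in 2^n$, so that $(x, f(y)) \in \bigcap_n D_n$ for all distinct $x,y \in P$ --- in effect a perfect set of mutually generic points. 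In the alternative proof via Lemma~\ref{L1}, the control really is pattern-based, but through a precise combinatorial device you would still need to invent: $y_a$ is made to vanish exactly on the blocks $J_n$ where $w_a$ carries its $2$'s (those $n$ with $i_n \in a$), and to carry scaled copies $2^{-|I_n|}\pi(p_n)$ of a fixed enumeration of $\bbQ^{<\omega}$ on the blocks where $w_a$ is all $1$'s, yielding the equivalence $y_a \in \HC(w_b) \iff b \nsubseteq a$; point-killing then follows by restricting to a perfect $\subseteq$-antichain. Without one of these mechanisms --- or a worked-out substitute including the quantitative estimates showing both that $\lVert B^k_{w_{z_0}}(y_{z_0})\rVert_2 \leq 1$ for all large $k$ and that the orbit of every other $y_z$ under $B_{w_{z_0}}$ is dense --- your argument does not yet prove the theorem.
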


\begin{theorem}\label{T2}
There is a Borel set $W \subseteq  \{1,2\}^\omega$ such that $\bigcap_{w \in W} \HC (w)$ is properly co-analytic, i.e., not analytic.  In particular, $\HC^* (W)$ is not Borel.
\end{theorem}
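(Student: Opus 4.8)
The plan is to complement the co-analytic upper bound of Proposition~\ref{hc* is co-analytic} with a matching lower bound. Since a Borel $W$ is in particular analytic, that proposition already gives $\HC^* (W) \in \mathbf\Pi^1_1$; it therefore suffices to produce a Borel $W \subseteq \{1,2\}^\omega$ for which $\HC^* (W)$ is $\mathbf\Pi^1_1$-hard, since a $\mathbf\Pi^1_1$-complete set is not analytic and hence, by Souslin's theorem, not Borel. I would establish the hardness by exhibiting a continuous reduction of the $\mathbf\Pi^1_1$-complete set $\mathrm{WF}$ of well-founded trees on $\omega$ to $\HC^* (W)$, i.e.\ a continuous map $T \mapsto y_T \in \ell^2$ with $T \in \mathrm{WF} \iff y_T \in \HC^* (W)$. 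Then $\mathrm{WF} = (T \mapsto y_T)^{-1}(\HC^*(W))$, so if $\HC^*(W)$ were analytic then $\mathrm{WF}$ would be analytic as well, which is false.

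The design goal is to fix a continuously parametrized, injective family $W = \{ w_b : b \in \omega^\omega \} \subseteq \{1,2\}^\omega$ together with a continuous assignment $T \mapsto y_T$ so that the single biconditional
\[
y_T \notin \HC (w_b) \iff b \in [T]
\]
holds for every tree $T$ and every $b \in \omega^\omega$. Granting this, and using that $T$ is well-founded iff $[T] = \emptyset$, we obtain $y_T \in \HC^* (W) \iff \forall b\, (y_T \in \HC (w_b)) \iff \forall b\, (b \notin [T]) \iff T \in \mathrm{WF}$, which is exactly the required reduction. Moreover, because $b \mapsto w_b$ is continuous and injective (the positions of the twos of $w_b$ will recover $b$), its range $W$ is Borel by the standard fact that an injective continuous image of a Borel set is Borel.

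To realize the biconditional I would exploit the ``sparse twos'' obstruction from the introduction: a weight sequence defeats $y$ once its twos are so sparse, relative to the tail decay of $y$, that $\lVert B_w^m (y) \rVert_2 < 1$ for all large $m$. Partitioning $\omega$ into consecutive blocks, I would place the twos of $w_b$ so that the number occurring up to the $n$-th block is governed by $b \restriction n$, and shape the coordinates of $y_T$ so that the tail-norm threshold it imposes at the $n$-th block is governed by which level-$n$ nodes lie in $T$. The constants (block lengths, placement and multiplicity of twos, and the magnitudes of the coordinates of $y_T$) are then calibrated so that the running products $\prod_{i<m} w_b(i)$ fail to lift the tail of $y_T$ at every block exactly when $b \restriction n \in T$ for all $n$, i.e.\ exactly when $b \in [T]$; this produces the sparse-twos defeat and hence $y_T \notin \HC(w_b)$. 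When instead some $b \restriction n \notin T$, the construction is arranged to release enough additional scaling that the orbit of $y_T$ becomes dense and $y_T \in \HC(w_b)$, with Lemma~\ref{dense if nonempty} and Lemma~\ref{infly many twos} organizing this positive direction.

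The main obstacle is the positive direction together with the uniform bookkeeping that makes defeat track $b \in [T]$ exactly. ``Not obviously defeated'' must be upgraded to genuine density of the orbit, and this must hold simultaneously for every $b \notin [T]$ while the \emph{same} fixed vector $y_T$ is defeated for every $b \in [T]$; since $[T]$ is closed and its complement open, the block parameters must cleanly separate the two regimes along every branch at once. The delicate point is that a single backward shift marches through $\omega$ in one linear order, so the block layout and the schedule of twos must be chosen so that the relevant $m$-fold iterate $B_{w_b}^m$ aligns the content of block $n$ correctly, and so that both $b \mapsto w_b$ and $T \mapsto y_T$ remain continuous. Calibrating these constants is where the real work lies; once it is done, the descriptive-set-theoretic wrapper (reduction from $\mathrm{WF}$, then Souslin's theorem for non-Borelness) is immediate.
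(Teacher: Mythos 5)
Your overall architecture coincides with the paper's: both arguments reduce the $\mathbf\Pi^1_1$-complete set of well-founded trees to $\HC^*(W)$ via a continuous map $T \mapsto y_T$, where $W$ is a continuously parametrized family of weight sequences coding branches, and both conclude with Proposition~\ref{hc* is co-analytic} plus Souslin's theorem. The paper realizes your biconditional through Lemma~\ref{L1}, applied with $A = \omega^{<\omega}$: it produces $y_a$ and $w_a$ for \emph{all} $a \subseteq \omega^{<\omega}$ satisfying $y_a \in \HC(w_b) \iff b \nsubseteq a$, and takes $W = \{ w_p : p \mbox{ a maximal chain in } \omega^{<\omega}\}$; since a branch lies in $[T]$ exactly when the chain of its initial segments is contained in $T$, this is precisely your condition $y_T \notin \HC(w_b) \iff b \in [T]$. (Incidentally, coding branches as chains makes $W$ not merely Borel but $G_\delta$, so the appeal to the Lusin--Souslin theorem is avoided.)

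The gap is that your biconditional is never established, and it is the entire mathematical content of the theorem. Everything after your ``design goal'' is a specification rather than a construction: the blocks, the placement of the twos, and the coordinates of $y_T$ are not defined, and you yourself acknowledge that ``calibrating these constants is where the real work lies.'' In the paper that calibration is Lemma~\ref{L1}: a partition of $\omega$ into intervals $I_n, J_n$ whose lengths obey recursive smallness conditions, a coding $n = \langle p_n, \varepsilon_n, i_n \rangle$ of (target string, accuracy, node) triples, and three separate norm-estimate claims. Moreover, your one concrete suggestion for the hard (positive) direction cannot work: Lemmas~\ref{dense if nonempty} and~\ref{infly many twos} show that $\HC(w_b)$ is dense, indeed comeager, but comeagerness gives no information about whether the \emph{particular} vector $y_T$ belongs to it --- and it must, simultaneously for every $b \notin [T]$, while the same $y_T$ is defeated for every $b \in [T]$. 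The mechanism achieving this in the paper is structural, not just a matter of constants: $y_T$ carries, in each block $J_n$ whose associated node $i_n$ is \emph{not} in $T$, the scaled string $2^{-|I_n|}\pi(p_n)$, and $w_b$ carries exactly $|I_n|$ twos in the blocks whose nodes lie in $b$, positioned so that $B^{\min (J_n)}_{w_b}$ restores $\pi(p_n)$ on an initial segment; a single node $i$ witnessing $b \nsubseteq T$ then yields density because every target pair $(q,\varepsilon)$ is realized at its own block $n = \langle p, \varepsilon, i\rangle$ with $\pi(p) = q$. Your level-by-level bookkeeping exhibits no analogue of this recycling of one witness across all targets, so as written the positive direction is not merely uncalibrated but unargued; the proposal is a correct plan, not a proof.
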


Theorem~\ref{T2} is more concrete than Theorem~\ref{T1} in the sense that the complexity of $W$ is highly restricted by the requirement that it be $G_\delta$.  

The third main theorem of this paper uses Martin's Axiom (MA) to construct a set $W \subseteq \{ 1 , 2 \}^\omega$ such that $\HC^* (W)$ does not have the property of Baire.

\begin{theorem}\label{non baire under ch}
Assuming MA, there exists $W \subseteq \{ 1 , 2 \}^\omega$ such that $\HC^* (W)$ does not have the property of Baire.
\end{theorem}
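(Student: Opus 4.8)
The plan is to build $W$ by transfinite recursion of length $\mathfrak c$, diagonalizing so that $\HC^*(W)$ and its complement are each non-meager in every nonempty open subset of $\ell^2$; by the localization form of the Baire category theorem this already forbids the property of Baire. Indeed, if $\HC^*(W) = U \mathbin{\triangle} M$ with $U$ open and $M$ meager, then on any basic open $V \subseteq U$ a point lies in $\HC^*(W)$ iff it avoids $M$, so $V \setminus \HC^*(W) = V \cap M$ is meager; and if $U = \emptyset$ then $\HC^*(W) = M$ is meager. Either way one of the two localized non-meagerness assertions fails. The entire role of MA is isolated in the single consequence $\add(\meager) = \mathfrak c$: a union of fewer than $\mathfrak c$ meager sets is meager, so dually an intersection of fewer than $\mathfrak c$ comeager sets is comeager.

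First I would fix an enumeration $\langle (V_\alpha, M_\alpha) : \alpha < \mathfrak c \rangle$ of all pairs consisting of a nonempty basic open $V_\alpha \subseteq \ell^2$ and a coded meager Borel set $M_\alpha \subseteq \ell^2$. Along the recursion I maintain a set $W_\alpha \subseteq \{1,2\}^\omega$ of weight sequences chosen so far, each with infinitely many $2$'s (so that $\HC(w)$ is a dense $G_\delta$, hence comeager, by Lemmas~\ref{infly many twos} and~\ref{dense if nonempty}), together with a set $Y_\alpha \subseteq \ell^2$ of points \emph{committed to be kept}, both of size $\le |\alpha| < \mathfrak c$. At stage $\alpha$ I meet two requirements. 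For the keeping requirement, since $|W_\alpha| < \mathfrak c$ the set $\bigcap_{w \in W_\alpha} \HC(w)$ is comeager, so $(V_\alpha \setminus M_\alpha) \cap \bigcap_{w \in W_\alpha} \HC(w)$ is comeager in $V_\alpha$; it is crucial here that I test against $V_\alpha \setminus M_\alpha$, which is comeager in $V_\alpha$ and therefore meets every comeager set, rather than against an arbitrary and possibly nowhere dense target. I pick a point $y_\alpha$ in this set, distinct from the fewer than $\mathfrak c$ points already killed, and place it in $Y_{\alpha+1}$; this will witness $\HC^*(W) \cap (V_\alpha \setminus M_\alpha) \neq \emptyset$ provided $y_\alpha$ is never killed later. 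For the killing requirement I choose $z_\alpha \in V_\alpha \setminus M_\alpha$ distinct from every committed point (possible since $|Y_{\alpha+1}| < \mathfrak c = |V_\alpha \setminus M_\alpha|$) and adjoin to $W$ a weight sequence $w_\alpha$ with $z_\alpha \notin \HC(w_\alpha)$ and $Y_{\alpha+1} \subseteq \HC(w_\alpha)$; this permanently removes $z_\alpha$ from $\HC^*(W)$ while respecting all standing commitments.

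Everything thus reduces to a single combinatorial input, which I expect to be the main obstacle. The \emph{Killing Lemma} asserts: given $z \in \ell^2$ and $Y \subseteq \ell^2$ with $|Y| < \mathfrak c$ and $z \notin Y$, there is $w \in \{1,2\}^\omega$ with $Y \subseteq \HC(w)$ and $z \notin \HC(w)$. The difficulty is that its two demands pull in opposite directions: keeping a vector hypercyclic is a comeager, Baire-generic condition on $w$, achieved by making the partial products $\prod_{j < k} w(i+j)$ large along many blocks, whereas forcing $z$ out of $\HC(w)$ is a non-generic, $F_\sigma$ condition — as the $1,2$-sequence example of the introduction shows, it is naturally arranged by inserting enough $1$'s to keep $\lVert B_w^k(z) \rVert_2$ bounded away from the scale of a fixed target along a cofinal set of $k$. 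Hence the lemma cannot be obtained by pure genericity in the $w$-variable.

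My plan for the Killing Lemma is an explicit construction of $w$ by finite approximations, dovetailing two interleaved families of instructions on disjoint blocks of coordinates. On a sparse family of ``sabotage'' blocks I depress the weights, timed to the tail of $z$, so that its orbit perpetually misses a chosen ball; on the complementary ``approximation'' blocks I place $2$'s realizing, for each triple $(y,t,\varepsilon)$ with $y \in Y$, $t$ in a fixed countable dense set, and $\varepsilon > 0$, some stage $k$ with $\lVert B_w^k(y) - t \rVert_2 < \varepsilon$. Because there are only $|Y| \cdot \aleph_0 < \mathfrak c$ approximation requirements but continuum-many available blocks, MA — now applied through $\add(\meager) = \mathfrak c$ in the space $\{1,2\}^\omega$ of weight sequences, via a ccc finite-condition poset — lets me meet all of them simultaneously while reserving the sabotage blocks. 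Verifying that the sabotage blocks can be taken sparse and far enough out that they do not spoil any approximation already scheduled for a kept vector is the crux of the argument. Granting the lemma, the recursion runs to length $\mathfrak c$, and the resulting $W$ makes $\HC^*(W) \cap V$ and $V \setminus \HC^*(W)$ nonempty off every meager set, hence both non-meager in every nonempty open $V$, so $\HC^*(W)$ fails the property of Baire.
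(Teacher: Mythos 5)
Your overall architecture is sound and fairly close in spirit to the paper's: a length-$\mathfrak c$ recursion alternating ``keep'' and ``kill'' requirements, with MA entering through the statement that fewer than $\mathfrak c$ comeager sets have comeager intersection; your localized non-meagerness criterion for failing the Baire property is also correct (the paper instead uses the fact that $\HC^*(W)$ is invariant under finitely many coordinate changes, so with the Baire property it would have to be meager or comeager). The genuine gap is in the step you yourself call the crux: your Killing Lemma is \emph{false as stated}. Hypercyclicity for a weighted shift is a tail property: since $B^k_w(x)(i)=w(i)w(i+1)\cdots w(i+k-1)\,x(i+k)$ depends only on $x\restrict[k,\infty)$, if $z$ agrees with some $y\in Y$ on all but finitely many coordinates (e.g.\ $z=y+e_0$ where $e_0=(1,0,0,\dots)$), or more generally if $z\restrict[m,\infty)=c\cdot\bigl(y\restrict[m,\infty)\bigr)$ for some $m\in\omega$ and scalar $c\neq 0$, then $B^k_w(z)=c\,B^k_w(y)$ for all $k\geq m$, and since $\ell^2$ has no isolated points and scaling by $c$ is a homeomorphism, $z\in\HC(w)\iff y\in\HC(w)$ for \emph{every} $w$. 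So with $Y=\{y\}$, $y$ hypercyclic, and $z=y+e_0$, we have $z\notin Y$ and $|Y|<\mathfrak c$, yet no $w$ satisfies $Y\subseteq\HC(w)$ and $z\notin\HC(w)$. Your recursion chooses $z_\alpha$ only ``distinct from every committed point,'' so nothing prevents $z_\alpha$ from being such a finite modification of a committed $y_\beta$, and then the stage cannot be completed. This particular defect is repairable: for each $y\in Y$ the linked points form a countable union of closed finite-dimensional (hence nowhere dense) subspaces, so under MA the whole forbidden set is meager and $z_\alpha$ can be chosen to avoid it. But it shows the lemma needs a real hypothesis relating $z$ to $Y$, not mere distinctness.

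Even after that repair, the corrected Killing Lemma remains the entire difficulty, and your sketch does not resolve its central conflict: the $2$'s placed on ``approximation blocks'' to keep each $y\in Y$ hypercyclic also act on the coordinates of $z$, and you maintain no invariant on the committed points that would make the relevant density arguments go through (membership in $\bigcap_{w\in W_\alpha}\HC(w)$ is not such an invariant). The paper's proof is engineered to avoid exactly this problem, in two ways. First, it never kills a prespecified point: at stage $a$ it forces with a countable poset of conditions $(\alpha,r,\delta)$, where $\alpha$ is a finite approximation to $w_a$ and $U_{r,\delta}$ is a shrinking basic neighborhood constrained so that $B^k_{\alpha^+}[U_{r,\delta}]$ stays in the unit ball; the generic filter produces $w_a$ \emph{together with} a point $x\in\bigcap_n\overline U_{r_n,\delta_n}$, which lands in the dense $G_\delta$ set $C_a$ (via Lemma~\ref{avoid a comeager}) and automatically satisfies $\norm{B^k_{w_a}(x)}_2\leq 1$ for all large $k$, hence $x\notin\HC(w_a)$. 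Second, the ``keep'' half rests on a genuine invariant: every committed vector is chosen in the comeager set $\sf NH$ of \emph{nicely} hypercyclic vectors, and it is precisely the tail clause $\norm{y\restrict[k+|q|,\infty)}_2<\epsilon 2^{-k}$ of Definition~\ref{mapsnicelydef} (see Remark~\ref{cthreerem}) that makes hypercyclicity transferable to the generically built $w_a$, yielding the density of the sets $E_{b,q,\varepsilon}$ (Lemma~\ref{nice conditions are dense}). If you want to retain your prespecified-kill formulation, you must both add the linkage hypothesis and then actually prove the resulting lemma --- which is substantial new work your sketch does not supply; adopting the generic-kill-plus-niceness design eliminates the need for it.
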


The proof of Theorem~\ref{non baire under ch} is essentially a more complex version of the construction under the continuum hypothesis (CH) of a Bernstein set.  Also note that, since MA is a consequence of CH, Theorem~\ref{non baire under ch} is a consequence of CH as well.  In fact, the proof under CH is somewhat simpler than the Martin's Axiom version.

For the reader unfamiliar with Martin's Axiom, it is (at an intuitive level) a strengthening of the Baire Category Theorem to encompass arbitrary intersections of fewer than continuum many dense open sets.  A detailed statement of Martin's Axiom is included in the next section.

%%%%%%%%%%%%%%%%%%%%%
\section{Fundamentals}
%%%%%%%%%%%%%%%%%%%%%

\subsection{Notation}

Let $\lVert \cdot \rVert_2$ denote the usual $\ell^2$ norm.  In what follows, this notation will be used for finite sequences as well, i.e., for $s \in \mathbb R^{<\omega}$,
\[
\lVert s \rVert_2 = \sqrt{s(0)^2 + \ldots + s(n)^2}
\]
assuming $s$ is of length $n+1$.

The notation $|s|$ will be used to denote both the length of a string (if $s \in 2^{< \omega}$) and the length of an interval (if $s \subseteq \omega$ is an interval).  The notation $\lVert x \rVert_\infty$ will denote the $\ell^\infty$- or sup-norm of $x$.  Again, this definition makes sense for any string $x$ -- either finite or infinite.

Since the topology of $\ell^2$ is a strict refinement of the product topology on $\mathbb R^\omega$, a couple minor lemmas are required to permit the use of some ``product topology intuition'' when working in $\ell^2$.  The first of these lemmas indicates a relationship between the 2-norm and the sup-norm of a finite string which will be quite useful. 

\begin{lemma}\label{LSQ}
If $s$ is a finite string of real numbers, having length $n$,
\[
\lVert s \rVert_2 \leq n^{1/2} \lVert s \rVert_\infty.
\]

\end{lemma}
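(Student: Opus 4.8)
The plan is to reduce everything to the elementary observation that each coordinate of $s$ is dominated in absolute value by the sup-norm. Write $s = (s(0), \ldots, s(n-1))$, so that by definition $\lVert s \rVert_\infty = \max_{i < n} |s(i)|$. For each index $i < n$ we then have $|s(i)| \leq \lVert s \rVert_\infty$, and hence $s(i)^2 \leq \lVert s \rVert_\infty^2$.

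The main step is to sum this coordinatewise bound over the $n$ indices. Since the squared $2$-norm is exactly the sum of the squared coordinates,
\[
\lVert s \rVert_2^2 = \sum_{i < n} s(i)^2 \leq \sum_{i < n} \lVert s \rVert_\infty^2 = n \lVert s \rVert_\infty^2.
\]
Taking square roots of both sides (both are nonnegative) yields $\lVert s \rVert_2 \leq n^{1/2} \lVert s \rVert_\infty$, as desired.

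There is no genuine obstacle here: the inequality is a direct consequence of the fact that a sum of $n$ terms each bounded by a common value $M$ is at most $nM$, specialized to $M = \lVert s \rVert_\infty^2$. The only points requiring any care are purely bookkeeping, namely that the length $n$ matches the number of summands and that the monotonicity of the square-root function is applied to nonnegative quantities. I would present the computation in the single displayed line above and conclude immediately.
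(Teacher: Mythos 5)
Your proof is correct and follows essentially the same argument as the paper: bound each coordinate $|s(i)|$ by $\lVert s \rVert_\infty$, sum the squares, and take square roots (the paper merely rephrases this via the substitution $\lVert s \rVert_\infty = n^{-1/2}\varepsilon$). No gaps; your more direct presentation is, if anything, cleaner.
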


\begin{proof}
Suppose that $s \in \mathbb R^n$ and $\lVert s \rVert_\infty = n^{-1/2} \cdot \varepsilon$ for some positive $\varepsilon$.  In other words, $|s(i)| \leq n^{-1/2} \varepsilon$ for all $i < n$.  It follows that
\begin{align*}
\lVert s \rVert_2
&= \sqrt{s(0)^2 + \ldots + s(n-1)^2}\\
&\leq \sqrt{n \cdot (n^{-1/2} \varepsilon)^2}\\
&= \varepsilon\\
&= n^{1/2} \lVert s \rVert_\infty
\end{align*}
This proves the lemma.
\end{proof}

\subsection{Topology in $\ell^2$}

In much of this paper, it will be helpful to use an alternative topological basis for $\ell^2$.  Given a finite nonempty string $q \in \mathbb Q^{<\omega}$ of rationals and a (rational) number $\varepsilon > 0$, let
\[
U_{q , \varepsilon} = \{ x \in \ell^2 : \lVert (x \upharpoonright |q|) - q \rVert_\infty < \varepsilon |q|^{-1/2}  \mbox{ and } \lVert x \upharpoonright [ |q| , \infty ) \rVert_2 < \varepsilon\}.
\]
In the case that $q = \langle \ \rangle$ is the empty sequence, simply let 
\[
U_{q , \varepsilon} = \{ x \in \ell^2 : \lVert x \rVert_2 < \varepsilon\}.
\]
First note that each $U_{q , \varepsilon}$ is open.  In order to check that the $U_{q , \varepsilon}$ form a basis for $\ell^2$, fix a basic open ball
\[
V = \{ x \in \ell^2 : \lVert x - x_0 \rVert_2 < \varepsilon\}
\]
where $x_0 \in \ell^2$ and $\varepsilon  >0$ are fixed.  Let $n \in \omega$ be such that
\[
\lVert x_0 \upharpoonright [n, \infty) \rVert_2 < \varepsilon /4
\]
and choose $q \in \mathbb Q^n$ such that
\[
\lVert x_0 \upharpoonright n - q \rVert_\infty < (\varepsilon / 4) \cdot n^{-1/2}.
\]
First of all, it follows from the definition of $U_{q , \varepsilon}$ that $x_0 \in U_{q , \varepsilon/4}$.  To see that $U_{q , \varepsilon/4} \subseteq V$, observe that if $x \in U_{q , \varepsilon/4}$,
\begin{align*}
\lVert x - x_0 \rVert_2
&\leq \lVert (x - x_0) \upharpoonright n\rVert_2  + \lVert (x - x_0) \upharpoonright [n , \infty)\rVert_2\\
&\leq n^{1/2} \lVert (x - x_0) \upharpoonright n\rVert_\infty + \lVert x \upharpoonright [n , \infty) \rVert_2 + \lVert x_0 \upharpoonright [n , \infty) \rVert_2\\
&< n^{1/2} (\lVert (x \upharpoonright n) - q\rVert_\infty + \lVert (x_0 \upharpoonright n) - q\rVert_\infty) + \varepsilon/4 + \varepsilon / 4\\
&< n^{1/2} ((\varepsilon / 4) n^{-1/2}  + (\varepsilon / 4) n^{-1/2} ) + \varepsilon/2\\
&= \varepsilon
\end{align*}
As $x \in U_{q , \varepsilon / 4}$ was arbitrary, it follows that $U_{q , \varepsilon / 4} \subseteq V$.  Since $V$ was an arbitrary open ball, this shows that the $U_{q , \varepsilon}$ form a topological basis for $\ell^2$.

The next lemma relates the continuity of functions into $\ell^2$ with respect to two different topologies: the subspace topology inherited from $\mathbb R^\omega$ and the inherent Banach space topology induced by the 2-norm.

\begin{lemma}\label{L0}
If $A$ is a countable set and $f : 2^A \rightarrow \ell^2$ is such that
\begin{enumerate}
\item $f$ is continuous with respect to the product topology on $2^A$ and the subspace topology on $\ell^{2}$ inherited from $\mathbb{R}^{\omega}$, and

\item there exists $y \in \ell^2$ such that $|f(x)(i)| \leq y(i)$ for all $x \in 2^A$ and $i \in \omega$,
\end{enumerate}
then $f$ is continuous with respect to the norm-topology on $\ell^2$.
\end{lemma}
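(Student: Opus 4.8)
The plan is to exploit the fact that the norm topology on $\ell^2$ differs from the product topology inherited from $\mathbb{R}^\omega$ only in its control of the ``tail'' of a sequence, and that the domination hypothesis~(2) pins down these tails uniformly. Fix a point $x \in 2^A$ and a number $\varepsilon > 0$; the goal is to produce a product-open neighborhood $N$ of $x$ in $2^A$ on which $\lVert f(x') - f(x)\rVert_2 < \varepsilon$ for every $x' \in N$. Mirroring the basis sets $U_{q,\varepsilon}$ introduced above, I would split this $2$-norm at a suitable cutoff $n$ as $\lVert (f(x')-f(x)) \upharpoonright n\rVert_2 + \lVert (f(x')-f(x)) \upharpoonright [n,\infty)\rVert_2$ and bound the two pieces separately.

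For the tail piece, since $y \in \ell^2$ I can choose $n$ so large that $\lVert y \upharpoonright [n,\infty)\rVert_2 < \varepsilon/4$. Hypothesis~(2) gives $|f(x')(i)| \leq y(i)$ for every $x' \in 2^A$ and every $i$, so that $\lVert f(x') \upharpoonright [n,\infty)\rVert_2 \leq \lVert y \upharpoonright [n,\infty)\rVert_2 < \varepsilon/4$ simultaneously for \emph{all} $x'$; by the triangle inequality the tail of the difference is then below $\varepsilon/2$ for every $x'$, with no appeal to continuity. For the initial-segment piece, note that for each fixed $i$ the coordinate map $x' \mapsto f(x')(i)$ is the composition of $f$ with the $i$-th projection, which is continuous on $\ell^2$ in the subspace topology inherited from $\mathbb{R}^\omega$; hence by hypothesis~(1) each such map is product-continuous on $2^A$, and therefore so is the finite map $x' \mapsto (f(x')(i))_{i<n}$ into $\mathbb{R}^n$. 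Shrinking to a basic clopen neighborhood $N$ of $x$, I can force $\lVert (f(x')-f(x))\upharpoonright n\rVert_\infty < (\varepsilon/2)\, n^{-1/2}$, and Lemma~\ref{LSQ} converts this sup-norm bound into $\lVert (f(x')-f(x))\upharpoonright n\rVert_2 < \varepsilon/2$. Adding the two estimates yields $\lVert f(x')-f(x)\rVert_2 < \varepsilon$, as desired.

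The only genuinely delicate point is the uniform tail estimate, and it is precisely here that the strength of hypothesis~(2) is used: a single dominating envelope $y \in \ell^2$ makes the tail norms $\lVert f(x') \upharpoonright [n,\infty)\rVert_2$ small uniformly in $x'$, whereas mere pointwise finiteness of each $f(x')$ would leave the tails free to escape and the conclusion would fail. The finite-dimensional step is then routine, since on $\mathbb{R}^n$ the product topology and the Euclidean topology coincide, and Lemma~\ref{LSQ} handles the passage from the sup-norm (natural for the product topology) back to the $2$-norm.
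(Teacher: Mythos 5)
Your proof is correct and follows essentially the same route as the paper's: split the norm at a cutoff $n$ chosen so that the tail of the dominating vector $y$ is below $\varepsilon/4$, use hypothesis (2) to bound all tails uniformly, and use product-continuity of the first $n$ coordinates together with Lemma~\ref{LSQ} to control the initial segment. The only cosmetic difference is that the paper invokes compactness of $2^A$ to get a single finite set $F \subseteq A$ working simultaneously for all pairs of points (i.e., uniform continuity), whereas you verify continuity pointwise at each $x$, which suffices for the stated conclusion.
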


\begin{proof}
Let $y \in \ell^2$ be as in the statement of the lemma.  Towards the goal of showing that $f$ is $\ell^2$-continuous, fix $\varepsilon > 0$ and let $n$ be such that
\[
\lVert y \upharpoonright [n, \infty) \rVert_2 < \varepsilon/4.
\]
Since $f$ is continuous into the subspace topology on $\ell^2$ (from $\mathbb R^\omega$), and $2^A$ is compact, there exists a finite $F \subseteq A$ such that, for $x_1 , x_2 \in 2^A$, if $x_1 \upharpoonright F = x_2 \upharpoonright F$, then
\[
|f(x_1) (i) - f(x_2) (i)| < n^{-1/2} \varepsilon/2
\]
for all $i < n$.  In particular, $x_1 \upharpoonright F = x_2 \upharpoonright F$ guarantees
\[
\lVert ( ( f(x_1) - f(x_2) ) \upharpoonright n ) \rVert_2 < \varepsilon/2
\]
by Lemma \ref{LSQ}.  It now follows that, whenever $x_1 , x_2 \in 2^A$ and $x_1 \upharpoonright F = x_2 \upharpoonright F$,
\begin{align*}
\lVert f(x_1) - f(x_2) \rVert_2
&\leq \lVert ( ( f(x_1) - f(x_2) ) \upharpoonright n ) \rVert_2 + \lVert f(x_1) \upharpoonright [n , \infty) \rVert_2\\
& \qquad \qquad + \lVert f(x_2) \upharpoonright [n , \infty) \rVert_2\\
&< \varepsilon/2 + 2 \lVert y \upharpoonright [n , \infty) \rVert_2\\
&< \varepsilon/2 + 2 \varepsilon/4 = \varepsilon.
\end{align*}
Since $\varepsilon$ was arbitrary this completes the proof.  Note that since $2^A$ is compact, $f$ is in fact uniformly continuous with respect to the standard ultrametric on $2^A$.
\end{proof}

\subsection{Hypercyclic vectors}

The next two lemmas are standard.  See, e.g., Theorems 1.2 and 1.40 in \cite{BayartMatheron}.

\begin{lemma}\label{dense if nonempty}
If $W \subseteq \{ 1 , 2 \}^\omega$ and $\HC^* (W)$ is nonempty, then $\HC^* (W)$ is dense.
\end{lemma}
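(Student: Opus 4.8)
The plan is to show that $\HC^*(W)$ is invariant under modification of finitely many coordinates, and then to combine this invariance with the fact that the finite modifications of any single vector are dense in $\ell^2$. The starting point is the explicit formula for the iterates of $B_w$: a straightforward induction gives, for every $y \in \ell^2$, every $n \in \omega$, and every $i \in \omega$,
\[
B_w^n(y)(i) = \left( \prod_{j=0}^{n-1} w(i+j) \right) y(i+n).
\]
The crucial feature to extract is that the only coordinate of $y$ appearing on the right-hand side is $y(i+n)$, whose index is at least $n$. Hence if $y, y' \in \ell^2$ agree on all coordinates of index $\ge m$ (that is, differ only on finitely many coordinates, all of index $< m$), then $B_w^n(y) = B_w^n(y')$ for every $n \ge m$ and every $w \in \{1,2\}^\omega$.

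Next I would transfer this observation to hypercyclicity. Since $\ell^2$ has no isolated points, the remark in the introduction says that $y$ is hypercyclic for $B_w$ iff its orbit meets every nonempty open set infinitely often; deleting the finitely many iterates with $n < m$ therefore leaves the tail $\{ B_w^n(y) : n \ge m \}$ dense. Combining this with the displayed formula, whenever $y \in \HC(w)$ and $y'$ agrees with $y$ on all coordinates $\ge m$, we get $\{ B_w^n(y') : n \ge m \} = \{ B_w^n(y) : n \ge m \}$, which is dense, so $y' \in \HC(w)$. As this reasoning applies to every $w \in W$ at once, $\HC^*(W)$ is invariant under modification of finitely many coordinates.

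Finally I would use the hypothesis that $\HC^*(W)$ is nonempty. Fix $y \in \HC^*(W)$, a target $x \in \ell^2$, and $\varepsilon > 0$. Since $x, y \in \ell^2$, choose $m$ with $\lVert x \upharpoonright [m,\infty) \rVert_2 + \lVert y \upharpoonright [m,\infty) \rVert_2 < \varepsilon$, and let $y'$ agree with $x$ on coordinates of index $< m$ and with $y$ on coordinates of index $\ge m$. Then $y'$ is a finite modification of $y$, so $y' \in \HC^*(W)$ by the previous paragraph, while
\[
\lVert y' - x \rVert_2 = \lVert (y - x) \upharpoonright [m,\infty) \rVert_2 \le \lVert y \upharpoonright [m,\infty) \rVert_2 + \lVert x \upharpoonright [m,\infty) \rVert_2 < \varepsilon.
\]
This exhibits a common hypercyclic vector within $\varepsilon$ of an arbitrary $x$, which is exactly density.

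The only point requiring care is the passage from a dense orbit to a dense tail of the orbit: this is what lets a finite change of coordinates be absorbed, and it relies precisely on $\ell^2$ having no isolated points (via the infinite-intersection reformulation of hypercyclicity). Everything else is bookkeeping with the iterate formula together with an elementary $\ell^2$-tail estimate, so I expect no further obstacles.
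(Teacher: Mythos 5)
Your proof is correct and takes essentially the same route as the paper: both arguments show that $\HC^*(W)$ is invariant under modification of finitely many coordinates (using the ``infinitely often'' reformulation of hypercyclicity to discard the first $m$ iterates, together with the fact that $B_w^n$ for $n \geq m$ does not see the first $m$ coordinates), and then conclude density from the density of finite modifications of a fixed common hypercyclic vector. The only cosmetic difference is that the paper writes the perturbation as $y + s^\frown \bar 0$ while you splice an initial segment of the target $x$ onto the tail of $y$.
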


\begin{proof}
Suppose that $y \in \HC^* (W)$.  Fix an open set $U \subseteq \ell^2$ and let $s \in R^{< \omega}$ be such that $y + s {}^\smallfrown \bar 0  \in U$.  To see that $y + s {}^\smallfrown \bar 0 \in \HC^* (W)$, fix $w \in W$ and an open set $V \subseteq \ell^2$.  Since $\ell^2$ is $T_1$, there are infinitely many $k \in \omega$ such that $B^k_w (y) \in V$.  In particular, there exists $k \geq |s|$ with this property.  For such a $k \geq |s|$,
\[
B^k_w (y + s {}^\smallfrown \bar 0) = B^k_w (y) \in V.
\]
As $w$ and $V$ were arbitrary, it follows that $y + s {}^\smallfrown \bar 0 \in \HC^* (W)$.  Since $U$ was arbitrary as well, it follows that $\HC^* (W)$ is dense in $\ell^2$.
\end{proof}

\begin{lemma}\label{infly many twos}
If $w \in \{ 1 , 2 \}^\omega$ is such that that there are infinitely many $i$ with $w(i) = 2$, then $\HC (w)$ is comeager.
\end{lemma}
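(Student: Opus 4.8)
The plan is to reduce the assertion to the nonemptiness of $\HC(w)$ and then to produce a hypercyclic vector via the Hypercyclicity Criterion. As noted in the introduction, $\HC(w)$ is always $G_\delta$, and a \emph{dense} $G_\delta$ set is automatically comeager (it is the intersection of the countably many dense open sets that witness it). Moreover, Lemma~\ref{dense if nonempty}, applied to the singleton $W=\{w\}$, shows that $\HC(w)$ is dense as soon as it is nonempty. Hence the entire problem collapses to exhibiting a single hypercyclic vector for $B_w$, after which comeagerness is automatic.

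To find such a vector I would verify the Hypercyclicity Criterion on the dense set $D \subseteq \ell^2$ of finitely supported rational sequences, i.e.\ the rational linear span of the standard basis vectors $e_m$. Let $S$ be the forward weighted shift determined by $S e_m = w(m)^{-1} e_{m+1}$; since $w(m)^{-1} \in \{1, 1/2\}$ this is a bounded operator, and a direct computation gives $B_w S = \id$, hence $B_w^k S^k = \id$ for every $k$. Iterating, $S^k e_m = \bigl(\prod_{l=m}^{m+k-1} w(l)\bigr)^{-1} e_{m+k}$, so that $\lVert S^k e_m \rVert_2 = 2^{-N_{m,k}}$, where $N_{m,k}$ denotes the number of indices $l \in [m, m+k)$ with $w(l) = 2$. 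On the other hand, for each fixed $m$ one has $B_w^k e_m = 0$ once $k > m$, so $B_w^k \to 0$ pointwise on $D$.

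The hypothesis enters at exactly one point. Because there are infinitely many $l$ with $w(l) = 2$, the counts $N_{m,k}$ tend to $\infty$ as $k \to \infty$ for every fixed $m$, and therefore $S^k \to 0$ pointwise on $D$ as well. Together with $B_w^k \to 0$ on $D$ and the exact identity $B_w^k S^k = \id$, this verifies the Hypercyclicity Criterion along $n_k = k$, so $B_w$ is topologically transitive and $\HC(w) \neq \emptyset$. I expect the only real content to be this contraction of the right inverse $S$; it is precisely where the infinitely-many-twos hypothesis is needed, and consistent with this, had $w$ carried only finitely many twos the products $\prod_l w(l)$ would eventually stabilize, $S^k e_m$ would fail to tend to $0$, and no hypercyclic vector need exist. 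All remaining points—the $G_\delta$-ness of $\HC(w)$, the reduction to nonemptiness, and the passage from the Criterion back to comeagerness—are routine or already in place.
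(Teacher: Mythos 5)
Your proof is correct, but it reaches nonemptiness by a genuinely different route than the paper. The reduction itself is identical: $\HC(w)$ is $G_\delta$, Lemma~\ref{dense if nonempty} applied to $W=\{w\}$ upgrades nonempty to dense, and a dense $G_\delta$ set is comeager. For the core step, however, the paper does not invoke the Hypercyclicity Criterion; it builds a hypercyclic vector by hand: enumerating the basic neighborhoods $U_{q_n,\varepsilon_n}$, it places at sparse positions $k_n$ the blocks $\bar q_n$ obtained from $q_n$ by dividing each coordinate by the product of weights that coordinate will meet under $B_w^{k_n}$, so that $B_w^{k_n}(y)$ begins with $q_n$ exactly; the hypothesis of infinitely many $2$'s is used precisely to choose the $k_n$ so large that these scaled blocks, and hence the orbit's tails, are summably small. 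Your argument packages the same cancellation into the right inverse $S e_m = w(m)^{-1}e_{m+1}$: the identities $B_wS=\id$ and $B_w^kS^k=\id$, the computation $\norm{S^k e_m}_2 = 2^{-N_{m,k}}$, and the fact that $N_{m,k}\to\infty$ (this is exactly where the hypothesis enters, parallel to the paper's choice of $k_n$) verify the Kitai/Gethner--Shapiro criterion on the dense set of finitely supported sequences, where $B_w^k\to 0$ trivially. What your approach buys is brevity and a conceptual frame --- it is essentially the standard characterization of hypercyclic weighted shifts, and indeed the paper's own citation of \cite{BayartMatheron} for this lemma points at that standard treatment; the cost is that the Criterion is a black box whose proof (Birkhoff transitivity plus Baire category) lies outside the paper, whereas the paper's construction is self-contained. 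Two small observations: first, the Criterion applied along $n_k=k$ already yields that $\HC(w)$ is a dense $G_\delta$, so your detour back through Lemma~\ref{dense if nonempty} is harmless but redundant; second, the paper's hands-on ``scaled blocks at sparse positions'' construction is not just a proof of this lemma but a template that is reused in the much more delicate Lemma~\ref{L1}, which is a benefit your shorter argument does not provide.
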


\begin{proof}
First of all, to see that $\HC (w)$ is nonempty, let $U_{q_n , \varepsilon_n}$ enumerate the basic open neighborhoods defined above.  Given a sequence $k_0 < k_1 < \ldots$, define for each $n$, a string $\bar q_n \in \mathbb Q^{<\omega}$ by 
\[
\bar q_n (i) = 2^{-| \{ j \in [i , i + k_n) : w (j) = 2\} |} \cdot q_n (i)
\]
(for $i < |q_n|$).  Note that if $y \in \ell^2$ has a copy of $\bar q_n$ starting at the $k_n$th bit of $y$, then $B^{k_n}_w (y)$ begins with a copy of $q_n$.  All that remains is to choose a specific sequence $k_0 < k_1 < \ldots$ which grows quickly enough that, for each $n$, 
\begin{itemize}
\item $k_{n+1} - k_n \geq |\bar q_n|$ and
\item $\lVert \bar q_n \rVert_2 \leq 2^{-n-1-k_n} \cdot \min \{ \varepsilon_j : j \leq n \}$.
\end{itemize}
Having done this, let $p_n = k_{n+1} - k_n - |\bar q_n|$ and 
\[
y = \bar q_0 {}^\smallfrown 0^{p_0} {}^\smallfrown \bar q_1 {}^\smallfrown 0^{p_1} {}^\smallfrown \ldots
\]
Then for each $n$, $y$ has a copy of $\bar q_n$ beginning at its $k_n$th term and so
\[
B^{k_n}_w (y) = q_n {}^\smallfrown \alpha
\]
for some $\alpha \in \ell^2$.  Moreover, by the choice of $k_{n+1} < k_n < \ldots$, 
\begin{align*}
\lVert \alpha \rVert_2 
&\leq \sum_{i > n} 2^{k_n} \cdot \lVert \bar q_i \rVert_2\\
&\leq \sum_{i > n} 2^{k_n} \cdot 2^{-i-1-k_i} \cdot \min \{ \varepsilon_j : j \leq n \}\\
&\leq \sum_{i > n} 2^{-i-1} \varepsilon_n\\
&= 2^{-n-1} \varepsilon_n < \varepsilon_n
\end{align*}
It follows that $B^k_w (y) \in U_{q_n , \varepsilon_n}$.  As $n$ was arbitrary, it follows that $y \in \HC (w)$.

It now follows from Lemma~\ref{dense if nonempty} (applied to $W = \{ w \}$) that $\HC (w)$ is dense.  As a dense $G_\delta$ set, $\HC (w)$ is thus comeager.
\end{proof}

\subsection{Martin's Axiom}

Given a partially ordered set $( \mathbb P , < )$, a subset $D \subseteq \mathbb P$ is {\em dense} iff, for each $p \in \mathbb P$, there is a $q \leq p$ with $q \in D$.  A set $G \subseteq \mathbb P$ is called a {\em filter} iff 
\begin{itemize}
\item $(\forall p , q \in \mathbb P) (p \geq q \in G \implies p \in G)$ and
\item $(\forall p , q \in G) (\exists r \in G) (r \leq p \mbox{ and } r \leq q)$.
\end{itemize}
Given a cardinal number $\kappa < \mathfrak c$ (the cardinality of $\mathbb R$), Martin's Axiom (MA) is the following assertion.

\begin{MA}
Given the following:
\begin{itemize}
\item a partially ordered set $\mathbb P$ with no uncountable antichains, i.e., $\mathbb P$ has the {\em countable chain condition} (or {\em ccc}), and
\item a collection $\mathcal D \subseteq \mathcal P (\mathbb P)$ of dense subsets of $\mathbb P$ with $| \mathcal D | < \mathfrak c$,
\end{itemize}
there exists a filter $G \subseteq \mathbb P$ such that $G \cap D \neq \emptyset$ for all $D \in \mathcal D$.
\end{MA}

Though consistent with the standard axioms of set theory, Martin's Axiom (like CH) is not a consequence of them.  A well-known consequence of MA asserts that the intersection of fewer than $\mathfrak c$-many comeager sets is still comeager.  See, for instance, Theorem 16.23 in Jech \cite{jech}.  In this sense, MA may be regarded as a strengthening of the Baire category theorem.

%%%%%%%%%%%%%%%%%%%%%
\section{Proof of Theorem~\ref{T1}}
%%%%%%%%%%%%%%%%%%%%%

Example 7.1 of Bayart-Matheron \cite{BayartMatheron} shows that there is no $y \in \ell^2$ which is hypercyclic for every $w \in \{ 1 , 2 \}^\omega$.  The argument in this section is motivated by a set theoretic construction which uses the proof of this fact.  We briefly sketch the idea:

For each $\epsilon > 0$, the map sending each $y \in \ell^{2}$ to the least $n$ such that $\norm{y \restrict [n, \infty)}_{2} < \epsilon$ is
Borel (but not continuous). It follows from the argument of Example 7.1 of \cite{BayartMatheron} that there is a Borel function $b$ sending each $y$ to a hypercyclic $w \in \{1,2\}$ for which $y$ is not hypercyclic. Since $\HC(w)$ is comeager whenever it is nonempty, if we let $M$ be a countable transitive model of a sufficient fragment of ZFC, with a Borel code for $b$ in $M$, and let $P$ be a perfect set of elements of $\ell^{2}$ which are mutually Cohen-generic over $M$, we get that $x \in \HC(b(y))$, for all distinct $x, y \in P$. The proof below carries out these ideas without using forcing.

\begin{lemma}\label{continuous function lemma}
There is a dense $G_\delta$ set $G \subseteq \ell^2$ and continuous function $f : G \rightarrow \{ 1 , 2 \}^\omega$ such that 
\begin{itemize}
\item $y \notin \HC (f(y))$ for each $y \in G$,
\item $\HC (f(y)) \neq \emptyset$ (and is therefore comeager) for each $y \in G$, and
\item for each open $U \subseteq \ell^2$, the image $f[U \cap G]$ contains at least two elements.  (As we will see in the proof, this actually follows from the first two properties.)
\end{itemize}
\end{lemma}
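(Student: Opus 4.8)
The plan is to first construct a Borel map $b : \ell^2 \to \{1,2\}^\omega$ realizing the first two desired properties (for every $y$ one has $y \notin \HC(b(y))$, while $b(y)$ carries infinitely many $2$s so that $\HC(b(y))$ is comeager), then to cut $b$ down to a comeager set on which it becomes continuous, and finally to read off the third property from the first two. To define $b$, fix $y \in \ell^2$ and recursively choose positions $n_1 < n_2 < \cdots$ by letting $n_k$ be the least $n > n_{k-1}$ with $\|y \restrict [n,\infty)\|_2^2 < 8^{-k}$ (these exist since $y \in \ell^2$); set $b(y)(n_k) = 2$ for all $k$ and $b(y)(j) = 1$ otherwise.

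Writing $d(m)$ for the number of $2$s of $b(y)$ below $m$, one has $d(m) = k$ on each interval $(n_k, n_{k+1}]$, so $\sum_{m} 4^{d(m)} y(m)^2 \le \|y\|_2^2 + \sum_{k\ge 1} 4^k \|y\restrict[n_k,\infty)\|_2^2 \le \|y\|_2^2 + \sum_{k \ge 1} 2^{-k} < \infty$. Since $B_{b(y)}^n(y)(i) = 2^{\,c(i,n)} y(i+n)$ with $c(i,n) \le d(i+n)$ the number of $2$s in $[i,i+n)$, this yields $\|B_{b(y)}^n(y)\|_2^2 \le \sum_{m \ge n} 4^{d(m)} y(m)^2 \to 0$, so the orbit of $y$ accumulates only at $0$ and $y \notin \HC(b(y))$; and $b(y)$ has infinitely many $2$s, so $\HC(b(y)) \neq \emptyset$ by Lemma~\ref{infly many twos}. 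Each map $y \mapsto \|y \restrict [n,\infty)\|_2$ is norm-continuous, so each $n_k$ is a Borel function of $y$, whence $b$ is Borel. Next I would invoke the standard fact that a Borel—indeed Baire-measurable—function from a Polish space into a second countable space is continuous on a comeager set (8.38 of Kechris \cite{kechris dst}), applied to $b$ with the norm topology on $\ell^2$. Shrinking the resulting comeager set to a dense $G_\delta$ subset $G$ and putting $f = b \restriction G$ gives a continuous $f : G \to \{1,2\}^\omega$ inheriting the first two properties verbatim.

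For the third property, fix a nonempty open $U \subseteq \ell^2$; since $G$ is dense, $f[U \cap G] \neq \emptyset$. If $f[U \cap G]$ were a singleton $\{w\}$, then every $y \in U \cap G$ would satisfy $f(y) = w$ and hence $y \notin \HC(w)$. But $w \in f[G]$, so $\HC(w) \neq \emptyset$ and therefore comeager (Lemma~\ref{infly many twos}); then $G \cap \HC(w)$ is comeager, hence dense, and so meets $U$, producing some $y \in U \cap G$ with $f(y) = w$ and $y \in \HC(w) = \HC(f(y))$, contradicting the first property. Thus $f[U \cap G]$ has at least two elements.

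The main obstacle is the construction of $b$: one must calibrate the sparsity of the $2$s so that the amplification factors $4^{d(m)}$ are overwhelmed by the $\ell^2$-tails of $y$ (forcing $y \notin \HC(b(y))$) while still inserting infinitely many $2$s (keeping $\HC(b(y))$ nonempty), and one must verify that this recursive, threshold-based definition is genuinely Borel in the norm topology so that the continuity-on-a-comeager-set theorem applies. Once $b$ is in hand, the passage to $f$ and the derivation of the third property are soft.
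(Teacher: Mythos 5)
Your proposal is correct and takes essentially the same route as the paper's own proof: a Borel map that places $2$'s at sparse positions keyed to the decay of $\|y\restrict[n,\infty)\|_2$, the Baire/Kuratowski theorem (Kechris 8.38) to restrict to a dense $G_\delta$ on which the map is continuous, and the identical comeagerness argument deriving the third bullet from the first two. The only difference is bookkeeping: you bound the orbit using the total count of $2$'s below $m$ against $8^{-k}$ thresholds (getting $\|B^n_{b(y)}(y)\|_2 \to 0$), whereas the paper uses doubling positions $i_{n,y} > 2i_{n-1,y}$ and counts $2$'s in a window to get the bound $\leq 1$; both suffice.
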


\begin{proof}
For each $y \in \ell^2$ and $n \in \omega$, let $i_{n , y} \in \omega$ be least such that
\begin{itemize}
\item $i_{n , y} > 2 i_{n - 1 , y}$ and
\item $\lVert y \upharpoonright [i_{n , y} , \infty) \rVert_2 < 2^{-1-n}$.
\end{itemize}
Let $f : \ell^2 \rightarrow \{ 1 , 2 \}^\omega$ be defined by 
\[
f(y) (i) = \begin{cases}
2 &\mbox{if } i = i_{n,y} \mbox{ for some } n \in \omega,\\
1 &\mbox{otherwise,}
\end{cases}\]
This function is Borel and hence (by Theorem 8.37 in Kechris \cite{kechris dst}), there is a dense $G_\delta$ set $G$ such that $f \upharpoonright G$ is continuous.

For each $y \in \ell^2$, the sequence $f(y) \in \{ 1 , 2 \}^\omega$ has infinitely many 2's and hence (by Lemma~\ref{infly many twos}) $\HC (f(y))$ is comeager.  In particular, $\HC (f(y))$ is comeager for all $y \in G$.

The next step is to see that $y \notin \HC (f(y))$.  Given $y$, fix any $k \geq i_{0,y}$.  Suppose that $i_{n,y} \leq k < i_{n+1 , y}$.  Since each $i_{p,y}$ is at least $2 i_{p-1 , y}$, it follows that no interval $I \subseteq [i_{n,y} , \infty)$ of length $k$ contains more that $n$ elements $i$ where $f(y) (i) = 2$.  Thus, to obtain $B^k_{f(y)} (y)$ from $y$, each bit of $y$ is multiplied by at most $n$ twos as it is shifted.  Therefore, 
\[
\lVert B^k_{f(y)} (y) \rVert_2 \leq 2^n \cdot \lVert y \upharpoonright [i_n , \infty) \rVert_2 \leq 1
\]
by the choice of $i_n$.  In particular, $B^k_{f(y)} (y) \in \{ z \in \ell^2 : \lVert z \rVert_2 > 1\}$ for only finitely many $k$ and hence $y \notin \HC (f(y))$.

Thus, the first two conditions in the statement of the Lemma have been satisfied.  To prove the third, fix an open set $U \subseteq \ell^2$ and pick any $y \in U \cap G$.  Since $\HC (f(y)) \neq \emptyset$ and is thus comeager, there exists $x \in \HC (f(y)) \cap U \cap G$.  Since $x \notin f(x)$, the weight sequences $f(x)$ and $f(y)$ must be distinct as otherwise
\[
x \in \HC (f(y)) = \HC (f(x))
\]
contrary to the choice of $f$.  It follows that $f[ U \cap G ]$ contains at least two elements.
\end{proof}

%It is worth remarking that the definition of $f$ in the proof of Lemma~\ref{continuous function lemma} is essentially contained in Example 7.1 of Bayart-Matheron \cite{BayartMatheron} which shows that any given $y \in \ell^2$ fails to be in $\HC (w)$ for some $w \in \{ 1 , 2 \}^\omega$.

The key lemma in the proof of Theorem~\ref{T1} is Lemma~\ref{perfect set lemma} below.  In essence, the proof of Lemma~\ref{perfect set lemma} is a direct construction of a perfect set of mutually generic Cohen reals.  (Recall that a {\em perfect} set is a closed set with no isolated points.)  This will be essential to building a set $W \subseteq \{ 1 , 2 \}^\omega$ of weight sequences such that $\HC^* (W)$ is of arbitrary definable complexity.  For the present purposes, the salient property of perfect sets is that, for any pointclass $\Gamma$, a given perfect set has subsets which are not in $\Gamma$. 

\begin{lemma}\label{perfect set lemma}
Given $f : G \rightarrow \{ 1 , 2 \}^\omega$ as in Lemma~\ref{continuous function lemma}, there is a perfect set $P \subseteq \ell^2$ such that
\begin{itemize}
\item $x \notin \HC (f(x))$ for all $y \in P$ (this is from Lemma~\ref{continuous function lemma})
\item $x \in \HC (f(y))$ for all distinct $x,y \in P$.
\end{itemize}
\end{lemma}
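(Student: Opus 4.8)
The plan is to build $P$ as the branch set of a Cantor scheme $\langle U_s : s \in 2^{<\omega}\rangle$ of basic open sets $U_{q,\varepsilon}$ in $\ell^2$, arranged so that $\overline{U_{s^\frown i}} \subseteq U_s$, $U_{s^\frown 0}\cap U_{s^\frown 1}=\emptyset$, and $\diam(U_s)\to 0$ along every branch, so that each $b \in 2^\omega$ determines a unique point $x_b \in \bigcap_n \overline{U_{b\restriction n}}$ and $P = \{x_b : b \in 2^\omega\}$ is perfect. Writing $G = \bigcap_n G_n$ with each $G_n$ dense open, I would demand $U_s \subseteq G_{|s|}$, so that $P \subseteq G$ and $f$ is defined and continuous on $P$. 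The target $\HC(f(y))$ is a dense $G_\delta$; I decompose $\HC(w) = \bigcap_m O_m^w$, where $O_m^w = \{x : (\exists k)\, B^k_w(x) \in U_{q_m,\varepsilon_m}\}$ is open (and dense when $w$ has infinitely many $2$'s, by Lemma~\ref{infly many twos}), the $U_{q_m,\varepsilon_m}$ enumerating the basis. The two things the construction must engineer are (i) $x_b \in O_m^{f(x_c)}$ for all distinct $b,c$ and all $m$, and (ii) $f(x_b) \neq f(x_c)$ for distinct $b,c$. Goal (ii) is forced: if $f(x_b)=f(x_c)$, then $x_b \in \HC(f(x_c)) = \HC(f(x_b))$ would contradict the first bullet of Lemma~\ref{continuous function lemma}.

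Goal (ii) is exactly where the third property of Lemma~\ref{continuous function lemma} enters. When splitting a node $s$, I choose $y_0,y_1 \in U_s \cap G$ with $f(y_0)\neq f(y_1)$ (possible since $f[U_s\cap G]$ has at least two elements), fix $n_s$ with $f(y_0)\restriction n_s \neq f(y_1)\restriction n_s$, and, using continuity of $f$ on $G$, choose the children $U_{s^\frown 0}\ni y_0$ and $U_{s^\frown 1}\ni y_1$ small enough that $f$ disagrees on $[0,n_s)$ across them. Since distinct $b,c$ first split at some node, this secures $f(x_b)\neq f(x_c)$ globally.

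The heart of the argument, and the main obstacle, is meeting a single requirement ``$x_b \in O_m^{f(x_c)}$ for all $b \supseteq s$ and $c \supseteq t$'' for a fixed incompatible pair $s \perp t$ and index $m$, using only a finite open commitment, despite the fact that $O_m^{f(x_c)}$ depends on the \emph{entire} not-yet-determined weight $f(x_c)$. I resolve this in three moves. First, fix $y^* \in U_t \cap G$, set $w^* = f(y^*)$, and choose a shift $k$ larger than the number of coordinates already constrained by $U_s$. Second, route $U_s$ into a basic open $V = U_{q',\varepsilon'}$ whose stem $q'$ extends that of $U_s$ and additionally sets the coordinates $[k,k+|q_m|)$ so that, after multiplication by the known weights $\prod_{j<k} w^*(i+j)$, the first $|q_m|$ entries of $B^k_{w^*}(x)$ land within $\varepsilon_m|q_m|^{-1/2}$ of $q_m$; I take $\varepsilon'$ small enough that every $x \in V$ also satisfies $\lVert x \restriction [k+|q_m|,\infty)\rVert_2 < \varepsilon_m 2^{-k}$. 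Third, and this is the key point making the commitment finite, because each application of $B^k_w$ scales a coordinate by at most $2^k$ (weights lie in $\{1,2\}$), the tail bound gives $\lVert B^k_{w}(x)\restriction[|q_m|,\infty)\rVert_2 < \varepsilon_m$ for \emph{any} weight $w$, while the first $|q_m|$ entries of $B^k_w(x)$ depend only on $w\restriction[0,N)$ with $N := k+|q_m|$. Hence $B^k_w(x) \in U_{q_m,\varepsilon_m}$ as soon as $w\restriction N = w^*\restriction N$. I therefore finally shrink $U_t$ (keeping $y^*$, so $U_t \cap G \neq \emptyset$) so that, by continuity of $f$, every $y \in U_t \cap G$ has $f(y)\restriction N = w^*\restriction N$; then for all $x \in V$ and all $c \supseteq t$ we obtain $x \in O_m^{f(x_c)}$, a permanent open commitment.

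Scheduling these actions by a standard bookkeeping over all triples $(s,t,m)$ with $s \perp t$ (interleaved with the splitting action and with the demands $U_s \subseteq G_{|s|}$, $\diam\to 0$, nested closures), the fusion converges to a perfect $P \subseteq G$. For distinct $x_b,x_c$ the pair splits below some handled $(s,t)$, so $x_b \in \bigcap_m O_m^{f(x_c)} = \HC(f(x_c))$, giving the second bullet; the first bullet $x_b \notin \HC(f(x_b))$ is immediate from $P \subseteq G$ and Lemma~\ref{continuous function lemma}. The one point needing care in the writeup is that the finitely many coordinate constraints placed on each stem by successive requirements remain mutually compatible and leave room for later splits — which holds because each requirement is handled with a shift $k$ beyond all previously constrained coordinates, so its stem constraints fall on fresh coordinates.
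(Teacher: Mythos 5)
Your construction is viable and is essentially a hands-on unpacking of the paper's proof: the paper also fuses a Cantor scheme $(P_s)$ in $\ell^2$, but pairs it with basic clopen sets $Q_s \subseteq \{1,2\}^\omega$ playing exactly the role of your constraint ``$f(y)\restrict N = w^*\restrict N$ on $U_t \cap G$,'' and it discharges the hypercyclicity requirements softly --- choosing decreasing dense open sets $D_n \subseteq \ell^2\times\{1,2\}^\omega$ with $\bigcap_n D_n \subseteq \HC \cap (G\times\{1,2\}^\omega)$ and arranging $P_s \times Q_t \subseteq D_n$ for distinct $s,t\in 2^n$, with the comeagerness of $\HC(f(x))$ doing the work that your explicit computation with $B^k_w$ does. (Your requirement $U_s \subseteq G_{|s|}$ is a point of care the paper glosses over.) However, there is a genuine gap in your routing step, and it sits exactly where the hypothesis $\HC(f(y))\neq\emptyset$ --- the second bullet of Lemma~\ref{continuous function lemma}, which you never invoke --- must be used. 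To make the first $|q_m|$ entries of $B^k_{w^*}(x)$ land near $q_m$, the stem of $V$ must set $x(k+i) \approx q_m(i)\,2^{-e_i}$, where $e_i = |\{ j \in [i, i+k) : w^*(j)=2\}|$. These entries sit beyond $|q_s|$, so $V \subseteq U_s$ forces roughly $\bigl(\sum_{i<|q_m|} q_m(i)^2 4^{-e_i}\bigr)^{1/2} < \varepsilon_s$, where $\varepsilon_s$ is the tail allowance of $U_s$, fixed before the requirement is handled. Taking $k$ merely ``beyond all previously constrained coordinates'' does nothing to make the $e_i$ large: if $w^*$ has few $2$'s below $k+|q_m|$, the required entries are as large as those of $q_m$ itself and no legal $V \subseteq U_s$ exists. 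In the extreme case of an all-ones $w^*$ one has $\norm{B^k_{w^*}(x)}_2 \leq \norm{x}_2$ for every $k$, so no point of a small $U_s$ is ever shifted near a large $q_m$ --- which is also why the lemma would be false without the nonemptiness hypothesis.

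The repair is short but must be said: since $\HC(w^*)\neq\emptyset$, $w^*$ has infinitely many $2$'s (a weight in $\{1,2\}^\omega$ with finitely many $2$'s sends every orbit to $0$ in norm), hence $\min_{i<|q_m|} e_i \to\infty$ as $k\to\infty$; choose $k$ so large that $2^{-\min_i e_i}\norm{q_m}_2$ fits inside the remaining tail budget of $U_s$, and only then lay down the stem of $V$. One more bookkeeping point needs care in the write-up: a triple $(s,t,m)$ cannot be discharged by a single action once $t$ has already been split, because your splitting deliberately makes $f$-values differ across descendants of $t$, so they cannot all satisfy one constraint $f(y)\restrict N = w^*\restrict N$ when $N$ passes the coordinates where they were separated. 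Discharge requirements on current leaves instead --- e.g., at stage $n$ handle every ordered pair of distinct level-$n$ nodes together with every $m\leq n$ --- so that for distinct branches $b,c$ and every $m$ some handled leaf pair lies below their splitting node; that is all the final fusion argument needs.
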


\begin{proof}
First of all, let $f : \ell^2 \rightarrow \{ 1 , 2 \}^\omega$ be as in Lemma~\ref{continuous function lemma}, i.e., 
\begin{itemize}
\item $f$ is continuous on a comeager $G_\delta$ set $G \subseteq \ell^2$
\item $y \notin \HC (f(y))$ for each $y \in G$
\item $\HC (f(y))$ is comeager for each $y \in G$
\item $f$ is not constant on $U \cap G$ for each open $U \subseteq \ell^2$
\end{itemize}
Let $\HC = \{ (y,w) \in \ell^2 \times \{ 1 , 2 \}^\omega : y \in \HC (w)\}$.  Since $\HC$ and $G$ are both dense $G_\delta$ sets, there exist dense open sets $D_n \subseteq \ell^2 \times \{ 1 , 2 \}^\omega$ such that
\[
\HC \cap (G \times \{ 1 , 2 \}^\omega) \supseteq \bigcap_n D_n.
\]
and $D_0 \supseteq D_1 \supseteq D_2 \supseteq \ldots$.

The key is to build Cantor schemes $(P_s)_{s \in 2^{<\omega}}$ (in $\ell^2$) and $(Q_s)_{s \in 2^{<\omega}}$ (in $\{ 1 , 2 \}^\omega$) such that
\begin{itemize}
\item Each $P_s$ is open and $Q_s$ is a basic clopen set.
\item If $s$ is an initial segment of $t$, then $P_s \supseteq \overline P_t$ and $Q_s \supseteq Q_t$.
\item If $s$ and $t$ have no common extensions, then $\overline P_s \cap \overline P_t = \emptyset$ and $Q_s \cap Q_t = \emptyset$.
\item $f[\overline P_s \cap G] \subseteq Q_s$.
\item If $s,t \in 2^n$ are distinct, then $P_s \times Q_t \subseteq D_n$.
\end{itemize}
The construction is by induction on $|s|$.  Suppose that $P_s$ and $Q_s$ are given with the properties above for all $s \in 2^n$.  

{\em Step 1.}  By the properties of $f$ (Lemma~\ref{continuous function lemma}) each $f[P_s \cap G]$ has at least two elements.  Therefore, for all $s\in 2^n$, the continuity of $f \upharpoonright G$ implies that there are open sets $U_{s {}^\smallfrown 0}$ and $U_{s {}^\smallfrown 1}$ such that
\begin{itemize}
\item $\overline U_{s {}^\smallfrown 0} , \overline U_{s {}^\smallfrown 1} \subseteq P_s$
\item $f[U_{s {}^\smallfrown 0} \cap G] \cap f[U_{s {}^\smallfrown 1} \cap G] = \emptyset$
\end{itemize}
The second property above (together with the properties of the $Q_s$) implies that for all distinct $s,t \in 2^{n+1}$, the sets $f[U_s \cap G]$ and $f[U_t \cap G]$ are disjoint.

Now fix $s \in 2^{n+1}$ and pick $x \in U_s \cap G$.  By the properties of $f$, the set of hypercyclic vectors for $f(x)$ is comeager and hence has nonempty intersection with all $U_t \cap G$ (for $t \in 2^{n+1}$), i.e., $\left( (U_t \cap G) \times \{ f(x) \} \right) \cap \HC \neq \emptyset$.  In particular, shrinking the neighborhoods $U_t$ if necessary (for $t \neq s$), there must be a neighborhood $Q_s$ of $f(x)$ such that $U_t \times Q_s \subseteq D_{n+1}$ for each $t \in 2^{n+1}$ with $t \neq s$.  Finally, by the continuity of $f \upharpoonright G$, it is possible to shrink $U_s$ to ensure that $f[\overline U_s \cap G] \subseteq Q_s$.

Repeat the process above for each $s \in 2^{n+1}$.  Each time this process is repeated, a new $Q_s$ is produced and each $U_t$ (for $t \in 2^{n+1} \setminus \{ s \} $) shrinks finitely many times.  To complete the induction, let $P_s = U_s$ (after it has been shrunk as above).  The $P_s$ and $Q_s$ now satisfy the desired properties above.

\vspace{1em}

{\em Step 2.}  Let $P$ be the perfect set associated with the Cantor scheme $(P_s)_{s \in 2^{< \omega}}$, i.e., 
\[
P = \bigcup_{\alpha \in 2^\omega} \bigcap_n P_{\alpha \upharpoonright n}.
\]
It follows from the definition of $f$ that $y \notin \HC (f(x))$ for each $x \in P$.  On the other hand, suppose that $x , y \in P$ are distinct.  Let $\alpha , \beta \in 2^\omega$ be such that
\[
\{ x \} = \bigcap_n P_{\alpha \upharpoonright n} \qquad \mbox{and} \qquad \{ y \} = \bigcap_n P_{\beta \upharpoonright n}.
\]
If $n \in \omega$ is large enough that $\alpha \upharpoonright n \neq \beta \upharpoonright n$, the properties of the $P_s$ and $Q_s$ guarantee that
\[
(x , f(y)) \in P_{\alpha \upharpoonright n} \times Q_{\beta \upharpoonright n} \subseteq D_n.
\] 
Since this holds for all but finitely many $n$, it follows that $(x , f(y)) \in \HC$, i.e., $x \in \HC (f(y))$ as desired.  This completes the proof.
\end{proof}

\begin{proof}[Proof of Theorem~\ref{T1}]
Let $f : P \rightarrow \{ 1 , 2 \}^\omega$ be as in Lemma~\ref{perfect set lemma}, i.e., for all $x,y \in P$,
\begin{itemize}
\item $y \notin \HC (f(y))$ and
\item $x \in \HC (f(y))$ if $x \neq y$.
\end{itemize}
Let $\Gamma$ be any pointclass as in the statement of Theorem~\ref{T1}.  Choose any $A \subseteq P$ with $A \in \Gamma \setminus \bar \Gamma$.  Consider the common hypercyclic vectors of $f[A]$, i.e., the set $\HC^* (f[A])$.  For $x,y \in P$,
\[
x \in \HC (f(y)) \iff x \neq y
\]
and so $\HC^* (f[A]) \cap P = P \setminus A$.  In particular, $\HC^* (f[A]) \cap P \notin \Gamma$ and hence $\HC^* (f[A]) \notin \Gamma$ either since $\Gamma$ contains the closed subsets of $\ell^2$.

This completes the proof.
\end{proof}

%%%%%%%%%%%%%%%%%%%%%
\section{Proof of Theorem~\ref{T2}}
%%%%%%%%%%%%%%%%%%%%%

Given a countable set $A$, a subset $a$ of $A$ may be identified with with its characteristic function in $2^A$.  In what follows, we will freely make use of this identification and regard $\mathcal P (A)$ (the power set of $A$) as being equipped with the usual product-of-discrete topology of $2^A$.

The following technical lemma is the key to the proof of Theorem~\ref{T2}.  In fact, it can be used to prove Theorem~\ref{T1} as well.

\begin{lemma}\label{L1}
Given a countable set $A$, it is possible to assign to each $a \subseteq A$, sequences $y_a \in \ell^2$ and $w_a \in \{ 1,2 \}^\omega$ such that
\begin{enumerate}
\item for all $a, b \subseteq A$, we have $y_a \in \HC (w_b) \iff b \nsubseteq a$, and
\item the maps $a \mapsto y_a$ and $a \mapsto w_a$ are homeomorphisms between $2^A$ and their ranges.
\end{enumerate}
\end{lemma}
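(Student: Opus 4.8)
The plan is to realize the biconditional $y_a \in \HC(w_b) \iff b \nsubseteq a$ through a \emph{guarded block} architecture, so that hypercyclicity of $y_a$ for $w_b$ is witnessed blockwise precisely when some $n \in b \setminus a$. Identify $A$ with $\omega$ and partition the index set $\omega$ into consecutive finite intervals $I_0, I_1, \ldots$ whose lengths grow very fast. Fix a \emph{guard} map $g : \omega \to A$ whose fibers $J_n = g^{-1}(n)$ are all infinite and, moreover, so arranged that for each fixed $n$ the blocks $\{ I_j : j \in J_n \}$ are assigned (by a standard bookkeeping over the countably many pairs consisting of a rational target string and a rational precision) approximation demands that are dense: every basic neighborhood $U_{q,\varepsilon}$ is demanded infinitely often along $J_n$. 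I then let $w_b$ be $2$ throughout $I_j$ when $g(j) \in b$ and $1$ on $I_j$ otherwise — so the $2$'s are confined to blocks — and I let $y_a$ carry a hypercyclicity gadget on $I_j$ when $g(j) \notin a$ and be $0$ on $I_j$ otherwise. Since each coordinate of $w_a$ depends on a single coordinate of $a$, the map $a \mapsto w_a$ is a continuous injection, hence (by compactness of $2^A$) a homeomorphism onto its range; the same conclusion for $a \mapsto y_a$ follows from Lemma~\ref{L0}, once the gadget on $I_j$ is given $\ell^2$-norm at most $2^{-j}$, providing a fixed $\ell^2$ envelope dominating every $y_a$ and making the dependence product-continuous.

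The gadget on an \emph{active} block $I_j$ (i.e. when $g(j) \notin a$) is a localized version of the hypercyclic vector built in Lemma~\ref{infly many twos}: it stores scaled copies of the demanded target string, placed so that a suitable forward shift brings a copy to the front across a constant-weight stretch, reproducing the target with $\ell^2$-small tail. The verification then splits as follows. If $b \subseteq a$, no block is simultaneously active and weighted: whenever $g(j) \in b \subseteq a$ the gadget is absent, and whenever the gadget is present we have $g(j) \notin a \supseteq b$, so $I_j$ carries only $1$'s. Thus every nonzero mass of $y_a$ sits on an all-ones block and is amplified only by the $2$'s of earlier in-$b$ blocks; bounding the total such amplification before $I_j$ and choosing the gadget masses to decay faster than this bound (exactly in the spirit of the sparseness estimate in Lemma~\ref{continuous function lemma}), I obtain $\lVert B_{w_b}^k (y_a) \rVert_2 < 1$ for all large $k$, so $y_a \notin \HC(w_b)$. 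The degenerate case $b = \emptyset$ is immediate, as $w_\emptyset$ is the unweighted contraction. Conversely, if $n \in b \setminus a$, then every block $I_j$ with $j \in J_n$ is both weighted and active, and along $J_n$ these blocks demand every $U_{q,\varepsilon}$ infinitely often, so the witness track alone should force density.

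The main obstacle is that, when a copy in block $I_j$ is brought to the front by a forward shift, its amplification is the product of \emph{all} preceding weights, which includes a $b$-dependent \emph{surplus} $2^{s}$ coming from earlier in-$b$ blocks; since $y_a$ must be chosen independently of $b$, its fixed stored values cannot pre-cancel this surplus. I would resolve this by equipping each active block with an internal \emph{scale ladder}: inside $I_j$ I lay down a run of $2$'s and store, at the positions reached after $0, 1, 2, \ldots$ of these own $2$'s, copies of the demanded target pre-scaled by the matching powers of $2$, covering every possible surplus value $s$. For whatever surplus the actual $b$ contributes, the shift hitting the rung with that value of $s$ produces the target at scale $1$ at the front, while the remaining rungs contribute only to the small tail; the total block norm is still kept below $2^{-j}$ by taking $|I_j|$ large. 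Reconciling this ladder with the fast decay required by the $b \subseteq a$ direction — and checking that the short run realizing each copy lies within a single constant-weight stretch, so the copy scales uniformly — is the delicate point; granting it, the biconditional and both homeomorphisms follow.
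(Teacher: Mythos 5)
You follow the paper's architecture quite closely---consecutive blocks, a guard assignment deciding which blocks of $w_b$ carry $2$'s and which blocks of $y_a$ carry stored targets, compactness plus Lemma~\ref{L0} for the homeomorphism claims, and the same decay estimate for the $b \subseteq a$ direction---and you correctly isolate the surplus problem as the crux. But your resolution of it, the scale ladder inside $y_a$, does not work. The amplification of a stored copy is \emph{entrywise}, not scalar: writing $c_i = |\{u \in [i, i+k) : w_b(u) = 2\}|$, we have $B^k_{w_b}(y_a)(i) = 2^{c_i}\, y_a(i+k)$, and $c_{i+1} - c_i$ equals $1$ if $w_b(i)=1$ and equals $0$ if $w_b(i)=2$, whenever $k+i$ lies inside the all-$2$ block $I_j$. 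Hence the copy emerges uniformly scaled only when $w_b$ is also all $2$'s on the front window $[0, |q|-1)$, and that depends on $b$: for instance, if $b = \{n\}$ and $g(0) \neq n$, then $c_i = c_0 + i$, so the stored copy comes out as $\bigl(q(0), 2q(1), 4q(2), \dots\bigr)$ up to a scalar---geometrically distorted---and no rung (a scalar pre-scaling) can repair this, so $U_{q,\varepsilon}$ is missed for every $q$ with nontrivial later entries. Your ``delicate point'' misdiagnoses the issue: the copy lying in a constant-weight stretch is automatic in your setup; what is needed is that the weight pattern on the \emph{front} window match the pattern where the copy sits (the condition $w(i)=w(k+i)$ for $i<|q|$, i.e.\ ``$|q|$-niceness at $k$'' in the terminology of the proof of Theorem~\ref{non baire under ch}), and in your design this fails for general $b$.

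There is a second, independent failure even if uniformity were granted: the unused rungs pollute the tail. The rung built for surplus $s+1$ is stored at scale $2^{-(s+1+d_{s+1})}$ and, when the rung for the actual surplus $s$ is shifted to the front, it is amplified by roughly $2^{s+d_{s+1}}$, so it appears in the tail at scale about $\tfrac12$ in the worst case, contributing about $\lVert q \rVert_2/2$ to $\lVert B^k_{w_b}(y_a)\upharpoonright[|q|,\infty)\rVert_2$. These scales are pinned by the surpluses the rungs must serve, so they cannot be made to decay, and any target with $\varepsilon < \lVert q\rVert_2/2$ is never hit. The paper escapes both problems by putting the correction into $w_b$ rather than into $y_a$---legitimately, since $w_b$ may depend on $b$ while $y_a$ may not. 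Concretely, when $i_n \in b$ the paper's $w_b$ places exactly $|I_n|$ minus (the number of $2$'s occurring earlier) many $2$'s at the start of $I_n$ (there is room because $|I_n| > \max(J_{n-1})$) and then mirrors its own initial segment along $J_n$, so that every window of length $\min(J_n)$ ending at a point of $J_n$ contains exactly $|I_n|$ many $2$'s; the stored block of $y_a$ is just the uniformly scaled $2^{-|I_n|}\pi(p)$, which therefore emerges as $\pi(p)$ exactly, with the tail controlled by the decay of later blocks. This asymmetric placement of the correction---weights absorb the $b$-dependent surplus and enforce niceness, while the vector stays passive---is the idea your proposal is missing, and without it the $b \nsubseteq a$ direction fails.
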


\begin{proof}
Let $\pi \colon \omega \to \bbQ^{\less\omega}$ be a surjection.  Let $A$ be the fixed countable set from the statement of the lemma.  For coding purposes, fix a bijection
\[
\langle \cdot , \cdot , \cdot\rangle :  \omega \times (\mathbb Q \cap (0,1)) \times A \rightarrow \omega.
\]
Given $n \in \omega$, let $p_n \in \omega$, $\varepsilon_n > 0$ and $i_n \in A$ be such that
\[
n = \langle p_n , \varepsilon_n , i_n \rangle.
\]
Finally, let
\[
\rho_n = \min \{ \varepsilon_r : r \leq n\}.
\]

The first step of the proof is to choose a suitable partition
\[
I_0 , J_0 , I_1 , J_1 , \ldots
\]
of $\omega$ into consecutive intervals, i.e., such that $\min (J_n) = \max (I_n) + 1$ and $\min (I_{n+1}) = \max (J_n) + 1$.  For convenience, we let $I_0 = \{ 0 \}$.  Each $J_n$ will be chosen with $|J_n| = | \pi (p_n)|$.  The lengths of the $I_n$ (for $n> 0$) will be chosen recursively and, for concreteness, of minimal length satisfying
\begin{enumerate}
	\item $|I_n| \geq |I_{n-1}|$,
	\item $| I_n | > \max (J_{n-1})$ and
	\item $2^{-| I_n|} \cdot \lVert \pi (p_n) \rVert_2 \leq 2^{-n-1} \cdot \rho_n \cdot 2^{- \max(J_{n-1})} \cdot 2^{-|I_{n-1}|}$.
\end{enumerate}
for $n > 1$.  The length of $I_0$ is arbitrary -- $I_0$ can even be the empty interval.

The next step is to define the desired $y_a$ and $w_a$ for each $a \subseteq A$.  For $n = \langle p , \varepsilon , i\rangle$, define $y_a$ on $I_n$ and $J_n$ by
\begin{enumerate}
	\item $y_a \upharpoonright I_n = \bar 0$,
	\item $y_a \upharpoonright J_n = \bar 0$ if $i \in a$, and
	\item $y_a \upharpoonright J_n = 2^{-| I_n |} \cdot \pi (p)$ if $i \notin a$.
\end{enumerate}
The first important observation about the map $a \mapsto y_a$ is that it is continuous.  To see this, first observe that every initial segment of $y_a$ is determined by an initial segment of $a$.  This implies that $a \mapsto y_a$ is continuous into the product topology on $\ell^2$ (which it inherits from $\mathbb R^\omega$).  Now invoke Lemma \ref{L0} and use the fact that $y_a$ is always termwise bounded by $y_\emptyset \in \ell^2$.  It now follows that $a \mapsto y_a$ is in fact continuous with respect to the norm-topology on $\ell^2$.

It also follows from the definition of $y_a$ that the function $a \mapsto y_a$ is injective.  As the domain of this map ($2^A$) is compact, $a \mapsto y_a$ must therefore be a homeomorphism with its range.

%\begin{claim*}
%The map $a \mapsto y_a$ is a continuous injection of $2^A$ into $\ell^2$, hence a homeomorphism with its range since its domain is compact.
%\end{claim*}
%
%Injectivity follows from the definition of $y_a$.  Therefore, the only thing to prove is continuity itself.  First notice that $y_\emptyset$ is a pointwise bound for all of the sequences $y_a$.  Now fix $a_0 \subseteq A$ and $\varepsilon > 0$.  Let $n \in \omega$ be large enough that $\lVert y_\emptyset \upharpoonright [\min(I_n) , \infty)\rVert_2 < \varepsilon/2$.  For brevity, write $m = \min (I_n)$.  Let $F$ be the finite set
%\[
%F = \{ i \in A : (\exists r < n) (i = i_r)
%\]
%It follows that if $a,b \subseteq A$ are such that $a \cap F = b \cap F$, then
%\[
%y_a \upharpoonright m = y_b \upharpoonright m
%\]
%and hence
%\begin{align*}
%\lVert y_a - y_b \rVert_2
%&\leq \lVert (y_a - y_b) \upharpoonright m\rVert_2 + \lVert (y_a - y_b) \upharpoonright [m, \infty) \rVert_2\\
%&\leq 0 + \lVert y_a \upharpoonright [m , \infty) \rVert_2 + \lVert y_b \upharpoonright [m , \infty) \rVert_2\\
%&\leq \lVert y_\emptyset \upharpoonright [m , \infty) \rVert_2 + \lVert y_\emptyset \upharpoonright [m , \infty) \rVert_2\\
%&< \varepsilon / 2 + \varepsilon / 2\\
%&= \varepsilon
%\end{align*}
%As $a$ , $b$ and $\varepsilon$ were arbitrary, $a \mapsto y_a$ must be continuous.  This completes the proof of the claim.

Now define $w_a \in \{ 1,2 \}^\omega$ (for $a \subseteq A$) by making sure that the restrictions $w_a \upharpoonright I_n \cup J_n$ satisfy
\begin{enumerate}
	\item $(\forall n) ( i_n \notin a \implies w_a \upharpoonright I_n \cup J_n = \bar 1)$,
	\item $(\forall n) ( i_n \in a \implies (\forall j \in J_n)( | \{ t \in [ j , \min (J_n) + j) : w_a (t) = 2\} | = | I_n |)$ and
	\item if $i , j \in I_n$ with $i < j$ and $w_a (j) = 2$, then $w_a (i) = 2$.
\end{enumerate}
The continuity of $a \mapsto w_a$ follows from the fact that initial segments of $w_a$ are completely determined by initial segments of $a$.

The next three claims will complete the proof.  The proofs of these three claims all follow similar arguments using the definitions of the $y_a$ and $w_a$.

\vspace{1em}

\begin{claim*}
Each $y_a$ is in $\ell^2$.
\end{claim*}

It suffices to show that the $\ell^2$ norm of $y_a$ is finite.  Indeed, by the triangle inequality and the third part of the definition of $y_a$,
\begin{align*}
\lVert y_a \rVert_2
&\leq \sum_{n \in \omega} \lVert y_a \upharpoonright J_n \rVert_2 \\
&\leq \sum_{n \in \omega} 2^{-| I_n |} \cdot \lVert  \pi ( p_n ) \rVert_2 \\
&\leq \sum_{n \in \omega} 2^{-n-1} \cdot \rho_n \cdot 2^{- \max(J_{n-1})} \cdot 2^{-|I_{n-1}|} \\
&\leq \sum_{n \in \omega} 2^{-n-1}\\
&\leq 1
\end{align*}
This proves the claim.

\vspace{1em}

\begin{claim*}
If $a,b \subseteq A$ with $b \subseteq a$, then $y_a \notin \HC (w_b)$.
\end{claim*}

For this claim, it suffices to show that $\lVert B_{w_b}^k (y_a) \rVert_2 \leq 1$ or $B_{w_b}^k (y_a) (0) = 0$ for each $k \in \omega$.  This will establish that there is no $k \in \omega$ such that $B_{w_b}^k (y_a)$ is in the open set
\[
U = \{ y \in \ell^2 : \lVert y \rVert_2 > 1 \mbox{ and } y(0) \neq 0\}.
\]
To this end, fix $k \in \omega$ and let $n \in \omega$ be such that $k \in I_n \cup J_n$.  First of all, if $i_n \in a$, then $y_a \upharpoonright I_n \cup J_n = \bar 0$ and hence
\[
B_{w_b}^k (y_a) (0) = w_b (0) \cdot \ldots \cdot w_b (k-1) \cdot y_a (k) = 0.
\]
On the other hand, if $i_n \notin a \supseteq b$, then $w_b \upharpoonright I_n \cup J_n = \bar 1$ and hence
\[
| \{ j < k : w_b (j) = 2 \}| \leq \max (J_{n-1}).
\]
To obtain an estimate of $\lVert B^k_{w_b} (y_a) \rVert_2$, a couple preliminary observations will be useful.  Suppose $t \in \omega$ is such that $k+t \in I_r$ for some $r \in \omega$.  In this case,
\[
B^k_{w_b} (y_a) (t) = 0
\]
since $y_a (k+t) = 0$.  If $k+t \in J_n$ (where $k \in I_n \cup J_n$), then
\[
|B^k_{w_b} (y_a) (t)| \leq 2^{\max (J_{n-1})} \cdot | y_a (k+t)|
\]
since $w_b \upharpoonright I_n \cup J_n = \bar 1$.  Finally, if $k+t \in J_r$ for some $r > n$, then
\begin{align*}
|B^k_{w_b} (y_a) (t)|
&\leq 2^k \cdot | y_a (k+t)|\\
& \leq 2^{\max (J_{r-1})} \cdot | y_a (k+t)|
\end{align*}
since $k \leq \max (J_n) \leq \max (J_{r-1})$.  It now follows by the triangle inequality that
\begin{align*}
\lVert B_{w_b}^k (y_a) \rVert_2
&\leq \sum_{r \geq n} 2^{\max (J_{r-1})} \cdot \lVert y_a \upharpoonright J_{r}\rVert_2\\
&\leq \sum_{r \geq n} 2^{\max (J_{r-1})} \cdot 2^{-r-1} \cdot \rho_r \cdot 2^{-\max(J_{r-1})} \cdot 2^{-|I_{r-1}|}\\
&\leq \sum_{r \geq n}  2^{-r-1}\\
&\leq 1
\end{align*}
This completes the proof of the claim.

\vspace{1em}

\begin{claim*}
If $a,b \subseteq A$ with $b \nsubseteq a$, then $y_a \in \HC (w_b)$.
\end{claim*}

For this final claim, it suffices to show that, for each $q \in \mathbb Q^{< \omega}$ and $\varepsilon > 0$, there is a $k \in \omega$ such that $B_{w_b}^k (y_a)$ is in the open set
\[
U_{q , \varepsilon} = \{ x \in \ell^2 : \lVert (x \upharpoonright |q|) - q \rVert_\infty < \varepsilon |q|^{-1/2}  \mbox{ and } \lVert x \upharpoonright [ |q| , \infty ) \rVert_2 < \varepsilon\}
\]
as these open sets form a topological basis for $\ell^2$.  Indeed, fix $q \in \mathbb Q^{<\omega}$ and let $p \in \omega$ be such that $\pi (p) = q$.  Fix $i \in b \setminus a$ and let $n = \langle p , \varepsilon, i \rangle$.  Since $i \in b$ and $i \notin a$, the second case in the definition of $w_b \upharpoonright I_n \cup J_n$ and the second case in the definition of $y_a \upharpoonright J_n$ are active.  In particular, for each $j \in J_n$,
\[
| \{ t \in [j , \min (J_n) + j) : w_b (t) = 2 \} | = | I_n |.
\]
It follows that
\[
B_{w_b}^{ \min( J_n ) } (y_a) = \pi (p) {}^\smallfrown y
\]
for some $y \in \ell^2$.  To show that $B_{w_b}^{ \min (J_n) } (y_a) \in U_{q , \varepsilon}$, it now suffices to show that $\lVert y \rVert_2 < \varepsilon$, since $q \prec B_{w_b}^{ \min (J_n) } (y_a)$ by the choice of $n$.  Indeed, observe that, again by the triangle inequality,
\begin{align*}
\lVert y \rVert_2
&\leq 2^{\min (J_n)} \cdot \sum_{r > n} \lVert y_a \upharpoonright J_r\rVert_2\\
&\leq 2^{\min (J_n)} \cdot \sum_{r > n} 2^{-|I_r|} \cdot \lVert \pi (p_r) \rVert_2 \\
&\leq 2^{\min (J_n)} \cdot \sum_{r > n} 2^{-r-1} \cdot \rho_r \cdot 2^{- \max(J_{r-1})} \cdot 2^{-|I_{r-1}|} \\
&\leq 2^{\max (J_n)} \cdot \sum_{r > n} 2^{-r-1} \cdot \rho_n \cdot  2^{-\max (J_n)} \\
&\leq \varepsilon \cdot \sum_{r > n} 2^{-r-1}\\
&< \varepsilon
\end{align*}
since $\rho_n \leq \varepsilon = \varepsilon_n$.  This complete the proof of the claim and proves Lemma~\ref{L1}.
\end{proof}

\begin{proof}[Proof of Theorem~\ref{T2}]
The key to this proof is an application of Lemma~\ref{L1} with the countable set $A$ taken to be $\omega^{<\omega}$.  With this in mind, let
\[
{\sf Wf} = \{ T \subseteq \omega^{<\omega} : T \mbox{ is a well-founded subtree}\}
\]
and
\[
{ \sf C} = \{ p \subseteq \omega^{<\omega} : p \mbox{ is a maximal $\prec$-chain}\}.
\]
In other words, ${\sf C}$ may be identified with the set of infinite branches through $\omega^{<\omega}$.  The set ${\sf Wf}$ properly co-analytic while ${\sf C}$ is $G_\delta$.  Let $W = \{ w_p : p \in {\sf C}\}$ and notice that $W$ is also $G_\delta$ since $p \mapsto w_p$ is a homeomorphism by Lemma~\ref{L1}.  By Proposition~\ref{hc* is co-analytic},
\[
\HC^* (W) = \bigcap_{w \in W} \HC (w)
\]
is co-analytic since $W$ is Borel.  To see that it is not analytic, recall that any subtree $T \subseteq \omega^{<\omega}$ is well-founded iff $T$ has no infinite branches.  In turn, this is equivalent to
\begin{align*}
(\forall p \in {\sf C}) (p \nsubseteq T)
&\iff (\forall p \in {\sf C}) (y_T \in \HC (w_p)) && \mbox{(by Lemma~\ref{L1})}\\
&\iff (y_T \in \HC^* (W).
\end{align*}
It follows that ${\sf Wf}$ is a continuous preimage of $\HC^* (W)$ under the map $T \mapsto y_T$.  In turn, this implies that $\HC^* (W)$ cannot be analytic.
\end{proof}

As was mentioned above, Lemma~\ref{L1} may be used to give another proof of Theorem~\ref{T1}.

\begin{proof}[Alternative proof of Theorem~\ref{T1}]
Let $P \subseteq 2^\omega$ be a perfect set such that $b \nsubseteq a$ for any two distinct $a , b \in P$.  The construction of such a set is a standard inductive argument (similar to the construction of a perfect independent set).  Let $y_a$ and $w_a$ be as in Lemma~\ref{L1} for $a \subseteq \omega$.  It follows that $y_a \in \HC (w_b)$ iff $a \neq b$ for all $a,b \in P$.

Given a non-self-dual pointclass $\Gamma$ which contains both the open and closed sets, fix $Y \subseteq P$ with $Y \in \Gamma \setminus \bar \Gamma$.  Since $P$ is closed, it follows that $P \setminus Y \in \bar \Gamma \setminus \Gamma$.  Let
\[
W = \{ w_a : a \in Y \}.
\]
Now consider the set
\[
\HC^* (W) = \bigcap_{w \in W} \HC (w).
\]
For $a \in P$, notice that $y_a \in \HC^* (W)$ iff $a \notin Y$.  Hence,
\[
\HC^* (W) \cap \{ y_a : a \in P\} = \{ y_a : a \in P \mbox{ and } a \notin Y\} = \{ y_a : a \in P \setminus Y \}
\]
It follows that $\HC^* (W) \notin \Gamma$ since $\{ y_a : a \in P \}$ is closed and $\{ y_a : a \in P \setminus Y \} \in \bar \Gamma \setminus \Gamma$ (because $a \mapsto y_a$ is a homeomorphism).  This completes the proof of the theorem.
\end{proof}

%%%%%%%%%%%%%%%%%%%%%%%%%%%
\section{Proof of Theorem~\ref{non baire under ch}}
%%%%%%%%%%%%%%%%%%%%%%%%%%%

The goal of this section is to show that, assuming Martin's Axiom (MA), there is a set $W \subseteq \{ 1 , 2\}$ whose set of common hypercyclic vectors does not have the property of Baire (Theorem~\ref{non baire under ch}). We expect that the assumption of additional axioms is unnecessary.

\subsection{Nicely hypercyclic vectors}

\begin{definition} Given $n,k \in \omega$, we say that function $w$ with domain $\omega$ is $n$-\emph{nice} at $k$ if $w(i) = w(k+i)$ for all $i < n$.
\end{definition}

Recall that for $q \in \bbQ^{<\omega}$ and $\epsilon \in \bbQ^{+}$ we have defined the set \[
U_{q , \varepsilon} = \{ x \in \ell^2 : \lVert (x \upharpoonright |q|) - q \rVert_\infty < \varepsilon |q|^{-1/2}  \mbox{ and } \lVert x \upharpoonright [ |q| , \infty ) \rVert_2 < \varepsilon\}
\]
and that the collection of sets of the form $U_{q, \epsilon}$ forms a basis for $\ell^{2}$.

\begin{definition}\label{mapsnicelydef} Given $k \in \omega$, $w \in \bbR^{\omega}$, $y \in \ell^{2}$, $q \in \bbQ^{<\omega}$ and $\epsilon \in \bbQ^{+}$, we say that $B^{k}_{w}$ \emph{maps} $y$ \emph{nicely} into $U_{q, \epsilon}$ if
\begin{enumerate}
\item $B^{k}_{w}(y) \in U_{q,\epsilon}$;
\item $w$ is $|q|$-nice at $k$;
\item\label{cthree} $\norm{y \restrict [k + |q|, \infty)}_{2} < \epsilon 2^{-k}$.
\end{enumerate}
We say that $y$ is \emph{nicely hypercyclic} for $w$ ($y \in \NHC(W)$) if
for each $U_{q,\epsilon}$ there is a $k$ such that $B^{k}_{w}$ maps $y$ nicely into $U_{q, \epsilon}$, and
\emph{nicely hypercyclic} if there is such a $w$. We write $\sf NH$ for the set of nicely hypercyclic elements of $\ell^{2}$. 
\end{definition}

\begin{remark}\label{cthreerem}
  Condition (\ref{cthree}) of Definition \ref{mapsnicelydef} implies
  that $\norm{B^{k}_{v}(y) \restrict [|q|, \infty)}_{2} < \epsilon$ for any $v \in \{1,2\}^{\omega}$, which is the second condition in the statement $B^{k}_{v}(y) \in U_{q,\epsilon}$. Given that $y$ satisfies Condition (\ref{cthree}), then, satisfaction of the other two conditions depends only finite initial segments of $w$ and $y$. In particular, if $w \in \{1,2\}^{\omega}$,  $B^{k}_{w}$ maps $y$ nicely into $U_{q, \epsilon}$ and $v \in \{1,2\}^{\omega}$ is such that
$|w^{-1}[\{2\}] \cap k| = |v^{-1}[\{2\}] \cap k|$ and $v$ is $|q|$-nice at $k$, then $B^{k}_{v}$ maps $y$ nicely into $U_{q, \epsilon}$.
\end{remark}

We present a convenient sufficient condition for being nicely hypercyclic and use it to show that $\sf NH$ is comeager in $\ell^{2}$.

\begin{lemma}
  An element $y$ of $\ell^{2}$ is nicely hypercyclic if for each $U_{q,\epsilon}$ and each $m\in \omega$ there exist $n \leq k$ in $\omega$ with $n \geq m$ and $k-n \geq m$ such that
  \[\norm{ (2^{n}y \restrict [k, k + |q|)) - q}_{\infty} < \epsilon |q|^{-1/2}\] and
  \[\norm{y \restrict [k + |q|, \infty)}_{2} < \epsilon 2^{-k}.\]
\end{lemma}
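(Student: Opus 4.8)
The plan is to produce a single weight sequence $w \in \{1,2\}^{\omega}$ witnessing the nice hypercyclicity of $y$. Since nice hypercyclicity for a fixed $w$ is a requirement ranging over the countably many basic neighborhoods $U_{q,\epsilon}$, I would build $w$ by recursion, treating one neighborhood at a time while committing to longer and longer initial segments of $w$.

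The conceptual heart is the interaction of niceness with the action of $B^{k}_{w}$. For $w \in \{1,2\}^{\omega}$ one has $B^{k}_{w}(y)(i) = \left(\prod_{j=0}^{k-1} w(i+j)\right) y(k+i) = 2^{c_{i}}\, y(k+i)$, where $c_{i}$ is the number of $j \in [i, i+k)$ with $w(j)=2$. If $w$ is $|q|$-nice at $k$, then passing from the window $[i,i+k)$ to $[i+1,i+1+k)$ deletes position $i$ and inserts position $i+k$, and $w(i)=w(i+k)$ (which holds for $i<|q|$) forces $c_{i}=c_{i+1}$; hence $c_{0}=c_{1}=\cdots=c_{|q|-1}$ and the multiplier is uniform across the first $|q|$ coordinates, so $B^{k}_{w}(y)\restrict|q| = 2^{c_{0}}\,(y\restrict[k,k+|q|))$. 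If moreover $|\{j<k : w(j)=2\}| = n$, this common multiplier is exactly $2^{n}$, and the first displayed hypothesis yields $\norm{(B^{k}_{w}(y)\restrict|q|)-q}_{\infty} < \epsilon|q|^{-1/2}$. The second displayed hypothesis is precisely Condition (\ref{cthree}) of Definition~\ref{mapsnicelydef}, so Remark~\ref{cthreerem} supplies the tail bound $\norm{B^{k}_{w}(y)\restrict[|q|,\infty)}_{2} < \epsilon$; together these give $B^{k}_{w}(y)\in U_{q,\epsilon}$ and show that $B^{k}_{w}$ maps $y$ nicely into $U_{q,\epsilon}$.

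It remains to construct a single $w$ that, for every $U_{q,\epsilon}$, exhibits some $k$ at which $w$ is $|q|$-nice with exactly $n$ twos below $k$. I would enumerate the basic sets and process them in turn, maintaining an initial segment of $w$ already committed up to length $K$ and containing $c$ twos. To treat $U_{q,\epsilon}$, apply the hypothesis with $m$ chosen larger than both $K$ and $|q|$, obtaining $n\le k$ with $n\ge m$ and $k-n\ge m$, so that $k\ge 2m>K$ and $k\ge|q|$. On the fresh block $[K,k)$ place exactly $n-c$ twos (ones elsewhere), then define $w$ on $[k,k+|q|)$ by copying, $w(k+i)=w(i)$ for $i<|q|$; this forces $|q|$-niceness at $k$ while keeping the count on $[0,k)$ equal to $n$. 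The feasibility constraints $0\le n-c$ and $n-c\le k-K$ follow from $m\ge K\ge c$ together with $k-n\ge m\ge K$. Committing $w$ up to $k+|q|$ and iterating (with the margins $m$ tending to infinity) produces a total $w\in\{1,2\}^{\omega}$.

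The main obstacle is exactly this globalization: a naive attempt to make the first $|q|$ entries of $w$ constant for every $q$ would force $w$ to be eventually constant and wreck the count control. The resolution is that niceness is never needed at a \emph{prescribed} $k$; the hypothesis lets me push $k$ arbitrarily far out by enlarging $m$, while simultaneously guaranteeing both ample room ($k-n\ge m$) to pad with ones and a target count ($n\ge m$) exceeding everything committed so far, so the copying on $[k,k+|q|)$ and the count adjustment on $[K,k)$ occupy disjoint blocks and cannot interfere. Once the recursion terminates, the single $w$ maps $y$ nicely into every $U_{q,\epsilon}$, so $y$ is nicely hypercyclic for $w$, hence nicely hypercyclic.
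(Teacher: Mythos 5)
Your proof is correct and follows essentially the same route as the paper's: recursively extend an initial segment of $w$, placing exactly $n$ twos below $k$ and copying $w\restrict|q|$ onto $[k,k+|q|)$ to force $|q|$-niceness, one basic neighborhood $U_{q,\epsilon}$ at a time. You additionally spell out details the paper leaves implicit (the uniform multiplier computation $c_{0}=\cdots=c_{|q|-1}=n$, the feasibility of the count adjustment, and taking $m$ larger than both the committed length and $|q|$), which is a faithful filling-in rather than a different argument.
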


\begin{proof}
  Assuming that $y$ satisfies the given condition, recursively build a $w \in \{1,2\}^{\omega}$ for which $y$ is nicely hypercyclic. Given a finite initial segment $w \restrict m$ of the desired $w$, and a basic open set $U_{q, \epsilon}$, let $n \leq k$ be as given by the hypothesis. Extend $w \restrict m$ to $w \restrict (k + |q|)$ so that $|\{ i < k : w(i) = 2\}| = n$ and $w(i) = w(k + i)$ for all $i < |q|$.
  These choices suffice to meet the challenge given by $U_{q,\epsilon}$.
\end{proof}

\begin{lemma}
The set of nicely hypercyclic vectors is comeager in $\ell^{2}$.
\end{lemma}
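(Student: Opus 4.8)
The plan is to derive the comeagerness of $\sf NH$ from the sufficient condition established in the preceding lemma. For each nonempty $q \in \bbQ^{<\omega}$, each $\epsilon \in \bbQ^{+}$, and each $m \in \omega$, let $G_{q,\epsilon,m}$ be the set of those $y \in \ell^{2}$ for which there exist $n \leq k$ in $\omega$ with $n \geq m$ and $k - n \geq m$ satisfying both $\norm{(2^{n} y \restrict [k, k+|q|)) - q}_{\infty} < \epsilon |q|^{-1/2}$ and $\norm{y \restrict [k+|q|, \infty)}_{2} < \epsilon 2^{-k}$. By the preceding lemma, every element of $\bigcap_{q,\epsilon,m} G_{q,\epsilon,m}$ is nicely hypercyclic, and this is a countable intersection. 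Hence it suffices to show that each $G_{q,\epsilon,m}$ is dense and open, for then $\sf NH$ contains a dense $G_{\delta}$ set and is comeager.

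Openness is straightforward. For fixed admissible $n$ and $k$, the first inequality constrains only the finitely many coordinates of $y$ lying in $[k, k+|q|)$ through a strict inequality, and so defines a set open already in the product topology (which the $\ell^{2}$-norm topology refines); the second inequality defines a norm-open set because $y \mapsto \norm{y \restrict [k+|q|, \infty)}_{2}$ is $\norm{\cdot}_{2}$-continuous, being the norm of a coordinate projection. Thus $G_{q,\epsilon,m}$, as a union over the admissible pairs $(n,k)$ of intersections of two open sets, is open.

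For density, I would fix a nonempty open $V \subseteq \ell^{2}$, choose $y_{0} \in V$ and $\delta > 0$ with the open $\delta$-ball about $y_{0}$ inside $V$, and fix $N$ with $\norm{y_{0} \restrict [N, \infty)}_{2} < \delta/2$. The idea is to perturb $y_{0}$ only in its far tail. Pick $n \geq m$ large enough that $2^{-n} \norm{q}_{2} < \delta/2$, and then pick $k \geq \max(N, n + m)$, which guarantees $n \leq k$ and $k - n \geq m$. Define $y$ to agree with $y_{0}$ on $[0,k)$, to equal $2^{-n} q$ on $[k, k+|q|)$, and to be $\bar 0$ on $[k+|q|, \infty)$. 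Then $2^{n} y \restrict [k, k+|q|) = q$, so the first inequality holds with value $0$, and the tail of $y$ beyond $k+|q|$ vanishes, so the second holds; thus $y \in G_{q,\epsilon,m}$. Finally, since $y$ and $y_{0}$ agree below $k \geq N$,
\[
\norm{y - y_{0}}_{2} \leq \norm{y \restrict [k,\infty)}_{2} + \norm{y_{0} \restrict [k,\infty)}_{2} \leq 2^{-n} \norm{q}_{2} + \tfrac{\delta}{2} < \delta,
\]
so $y \in V$, proving $G_{q,\epsilon,m} \cap V \neq \emptyset$.

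The argument is almost entirely routine; the one point requiring care---and the place I expect a careless attempt to slip---is the simultaneous satisfaction of the lemma's two size requirements, $n \geq m$ and $k - n \geq m$. Density forces us to take the scaling exponent $n$ large (so that the inserted block $2^{-n}q$ is $\ell^{2}$-small) while also pushing the entire modified block past coordinate $N$ (so that $k$ is large), and these are compatible only because the lemma imposes no upper bound, merely the lower bounds $n, k-n \geq m$. The empty-string case $q = \langle\rangle$, where $U_{q,\epsilon}$ is simply the $\epsilon$-ball about the origin, is handled by the same tail perturbation and is even easier.
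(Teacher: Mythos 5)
Your proof is correct and follows essentially the same route as the paper: both reduce to the sufficient-condition lemma, fix $(q,\epsilon,m)$, and establish density by inserting a scaled block $2^{-n}q$ into the far tail of a given vector. The only structural difference is that you verify openness of each $G_{q,\epsilon,m}$ topologically and then produce a single exact witness for density, whereas the paper certifies both at once by exhibiting, inside every basic open set $U_{r,\delta}$, an entire basic open ball $U_{s,\rho}$ of approximate witnesses (at the cost of a longer chain of estimates); both arguments are sound.
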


\begin{proof}
By the previous lemma, it suffices to show that for each $U_{q,\epsilon}$ and $m \in \omega$, for comeagerly many $y$ there exist $n \leq k$ with $n \geq m$ and $k-n \geq m$ such that \[\norm{ 2^{n}y \restrict [k, k + |q|) - q}_{\infty} < \epsilon |q|^{-1/2}\] and
\[\norm{y \restrict [k + |q|, \infty)}_{2} < \epsilon 2^{-k}.\] In fact we will show that the set of such $y$ is dense open. Fix $q, \epsilon$ and $m$, and a basic open set
$U_{r, \delta}$.
%Shrinking $\delta$, we may assume that it is of the form $2^{-\ell}$, for some positive integer $\ell$.

Let
\begin{itemize}
\item $n$ and $k$ be such that $n \geq m$, $2^{n} > 3\norm{q}_{2}/\delta$, $k \geq n + m$ and $k > |r|$;
\item $s$ be $r^{\frown}t^{\frown}(2^{-n}q)$, where $t$ is the all-$0$ sequence of length $k - |r|$;
\item $\rho> 0$ be less than $\epsilon 2^{-k}$ and $\delta/3$.
\end{itemize}

Now let $z$ be an element of $U_{s, \rho}$. Then
\[\norm{ (z \restrict |r|) - r}_{\infty} \leq \norm{ (z \restrict |s|) - s}_{\infty} < \rho |s|^{-1/2} < \delta |r|^{-1/2}\]
 and
\begin{align*}
\norm{z \restrict [|r|, \infty)}_{2} &\leq \norm{z \restrict [|r|, |s|)}_{2} + \norm{z \restrict [|s|, \infty)}_{2}\\
 &< \norm{s \restrict [|r|, |s|)}_{2} + \norm{(z \restrict [|r|, |s|)) - (s \restrict [|r|, |s|))}_{2} + \rho\\
 &\leq 2^{-n}\norm{q}_{2} + (|s|-|r|)^{1/2}\norm{(z \restrict [|r|, |s|)) - (s \restrict [|r|, |s|))}_{\infty} + \rho\\
 &< 2^{-n}\norm{q}_{2} + (|s|-|r|)^{1/2}\rho|s|^{-1/2} + \rho\\
 &< \delta/3 + 2\rho < \delta,
\end{align*}
 so $z \in U_{r, \delta}$. Furthermore,
\begin{align*}
\norm{z \restrict [k, k+|q|) - 2^{-n}q}_{\infty} &= \norm{z \restrict [k, k+|q|) - s \restrict[k, k + |q|)}_{\infty}\\
&\leq \norm{(z \restrict (k + |q|) - s}_{\infty}\\
& < \rho (k + |q|)^{-1/2},
\end{align*}
so
\[\norm{ 2^{n}z \restrict [k, k + |q|) - q}_{\infty} < 2^{n}\rho(k + |q|)^{-1/2} < \epsilon |q|^{-1/2}.\]
Since \[\norm{z \restrict [k + |q|, \infty)}_{2} < \rho <  \epsilon 2^{-k},\]
$z$, and thus every member of $U_{s, \rho}$, satisfies the given conditions on $q, \epsilon$ and $m$.

%dense open sets $D_{j}$ $(j \in \omega)$. We will find sets $U_{s_{j}, \delta_{j}} \subseteq D_{j}$ such that $\overline{U_{s_{j+1}, %\epsilon_{j+1}}} \subseteq U_{s_{j}, \epsilon_{j}}$.
\end{proof}

We define the set of nice weight sequences by adding a nontriviality condition.

\begin{definition} We say that a $w \in \{1,2\}^{\omega}$ is \emph{nice} if $w^{-1}[\{1\}]$ and $w^{-1}[\{2\}]$ are both infinite and, for each $n \in \omega$ there
are infinitely many $k \in \omega$ such that $w$ is $n$-nice at $k$.
\end{definition}

The following remark is not used in the rest of the section, and is stated for its independent interest. 

\begin{remark} If $W$ is a countable set of
nice weight sequences then $\NHC^{*}(W)$ is nonempty. To see this, recursively build a $y \in \NHC^{*}(W)$ by choosing initial segments $y \restrict m$ while making promises to keep $\norm{y \restrict [m, \infty)}_{2}$ smaller than some sufficiently small $\rho_{m}$, letting $\rho_{0} =1$. Given $y \restrict m$, $\rho_{m}$, $w \in W$ and a basic open set $U_{q, \epsilon}$, let $n$ and $k$ be such that \begin{itemize}
\item $2^{-n}\norm{q}_{2} < \rho_{m}$,
\item $k \geq m$,
\item $|w^{-1}[\{2\}]| = n$ and
\item $w$ is $|q|$-nice at $k$.
\end{itemize}
Then we can extend $y$ with all zeros until position $k$, and let $y \restrict [k, k + |q|) = 2^{-n}q$. We then set $\rho_{k + |q|}$ to be smaller than both $\epsilon 2^{-k}$ and $\rho_{m} - 2^{-n}\norm{q}_{2}$ and continue the construction.
\end{remark}

\subsection{The poset of partial witnesses}

As a notational convenience, for each finite sequence $\alpha$ in $\{1,2\}^{\less\omega}$, let $\alpha^{+}$ be the infinite extension of $\alpha$ with all $2$'s.

Consider the collection of all triples $( \alpha , r , \delta ) \in \{ 1 , 2 \}^{<\omega} \times \mathbb Q^{<\omega} \times (\mathbb Q \cap (0,1) )$ (henceforth referred to as {\em conditions}) satisfying the following:
\begin{itemize}
\item $| \alpha | \geq | r |$ and
\item $\delta < 2^{-| \alpha |}$.
\end{itemize}
Note that these two properties together imply that
\[
B^{|\alpha |}_{\alpha^+} [ U_{r , \delta} ] \subseteq \{ y \in \ell^2 : \lVert y \rVert_2 < 1\}.
\]
Given conditions $p_1 = ( \alpha_1 , r_1 , \delta_1)$ and $p_2 = (\alpha_2 , r_2 , \delta_2)$, say that $p_2$ {\em extends} $p_1$ (written $p_2 < p_1$) iff
\begin{itemize}
\item $\alpha_1$ is an initial segment of $\alpha_2$,
\item $\overline U_{r_2 , \delta_2} \subseteq U_{r_1 , \delta_1}$ and,
\item for all $k \in [ | \alpha_1 | , | \alpha_2 | )$,
\[
B^k_{\alpha_2^+} [U_{r_2 , \delta_2} ] \subseteq \{ y \in \ell^2 : \lVert y \rVert_2 < 1\}.
\]
\end{itemize}

The next two lemmas are the key to using either CH or Martin's Axiom to construct a set $W \subseteq \{ 1  , 2 \}^\omega$ of weight sequences such that $\HC^* (W)$ does not have the property of Baire.

\begin{lemma}\label{avoid a comeager}
Given a condition $p_1 = ( \alpha , r , \delta )$, a dense open set $D \subseteq \ell^2$ and a natural number $N$, there exists a condition $p_2 = (\beta , s , \eta)$ such that
\begin{enumerate}
\item $p_2 < p_1$
\item $U_{s , \eta } \subseteq D$
\item $| \beta | > N$
\end{enumerate}
\end{lemma}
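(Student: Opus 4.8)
The plan is to build $p_2 = (\beta, s, \eta)$ in three stages: first locate a basic neighborhood $U_{s,\eta}\subseteq D$ whose $\ell^2$-tail is artificially small, then extend $\alpha$ to $\beta$ by appending $1$'s, and finally shrink $\eta$ to absorb the remaining error. First I would pin down the neighborhood. Put $\tau = \min(\delta, 2^{-|\alpha|-3})$ and consider the open set $V = \{x \in U_{r,\delta} : \norm{x\restrict[|r|,\infty)}_2 < \tau\}$, which is nonempty since $r^{\frown}\bar 0 \in V$. As $D$ is dense open, $D\cap V$ is nonempty open; pick $x_0$ in it and, using that the $U_{q,\varepsilon}$ form a basis for the metric space $\ell^2$, choose $s\in\bbQ^{<\omega}$ and $\eta_0>0$ with $x_0\in U_{s,\eta_0}$ and $\overline U_{s,\eta_0}\subseteq D\cap V$. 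This already secures clause (2) of the lemma ($U_{s,\eta}\subseteq D$) and the containment $\overline U_{s,\eta}\subseteq U_{r,\delta}$ needed for $p_2<p_1$; crucially, every $x\in U_{s,\eta_0}\subseteq V$ satisfies $\norm{x\restrict[|r|,\infty)}_2 < \tau \le 2^{-|\alpha|-3}$.

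Next I would set $\beta = \alpha^{\frown} 1^m$ with $m$ large enough that $|\beta| > N$ and $|\beta|\ge|s|$, and then shrink $\eta_0$ to some $\eta < 2^{-|\beta|}$ (this preserves every containment above and makes $(\beta,s,\eta)$ a legitimate condition, the $k=|\beta|$ ball-bound being automatic from $|\beta|\ge|s|$ and $\eta<2^{-|\beta|}$). The real content is verifying the third clause of $p_2 < p_1$: for every $k\in[|\alpha|,|\beta|)$ and $x\in U_{s,\eta}$, $\norm{B^k_{\beta^+}(x)}_2 < 1$. Writing $B^k_{\beta^+}(x)(i)=\bigl(\prod_{j=0}^{k-1}\beta^+(i+j)\bigr)x(i+k)$ and reindexing by $\ell=i+k$, the coefficient of $x(\ell)^2$ is $c_{k,\ell}^2=\bigl(\prod_{t=\ell-k}^{\ell-1}\beta^+(t)\bigr)^2$. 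The entire point of appending $1$'s is that $\beta^+(t)=2$ only when $t<|\alpha|$ or $t\ge|\beta|$; hence for $\ell\le|\beta|$ the window $[\ell-k,\ell-1]$ meets $2$'s only inside $\alpha$, giving the $k$-uniform bound $c_{k,\ell}\le 2^{|\alpha|}$, while in all cases the crude bound $c_{k,\ell}\le 2^{k}\le 2^{|\beta|-1}$ holds.

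I would then split the sum at $|\beta|$. For the initial block, using $k\ge|\alpha|\ge|r|$ and the small-tail property of $V$,
\[
\sum_{k\le\ell\le|\beta|}c_{k,\ell}^2\,x(\ell)^2 \le 2^{2|\alpha|}\sum_{\ell\ge k}x(\ell)^2 \le 2^{2|\alpha|}\,\norm{x\restrict[|r|,\infty)}_2^2 \le 2^{2|\alpha|}\tau^2 \le 2^{-6},
\]
so the tail-shrinking through $V$ pulls this piece well below $1$. For $\ell>|\beta|$ the coordinate $x(\ell)$ lies in the tail of $U_{s,\eta}$, whence
\[
\sum_{\ell>|\beta|}c_{k,\ell}^2\,x(\ell)^2 \le 2^{2|\beta|-2}\,\norm{x\restrict[|s|,\infty)}_2^2 \le 2^{2|\beta|-2}\eta^2 < \tfrac14
\]
by $\eta<2^{-|\beta|}$. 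Adding the two pieces gives $\norm{B^k_{\beta^+}(x)}_2^2 < 2^{-6}+\tfrac14 < 1$, so $p_2=(\beta,s,\eta)$ is the desired extension.

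The hard part is the initial-block estimate: with the naive bound one only obtains $2^{2|\alpha|}\delta^2<1$, which can be arbitrarily close to $1$ and therefore cannot survive adding the tail term. The two ideas that defeat this are (i) appending $1$'s, so that the amplification on the bulk region $[|\alpha|,|\beta|)$ stays bounded by $2^{|\alpha|}$ uniformly in $k$ rather than growing like $2^{k}$, and (ii) pre-selecting $U_{s,\eta}$ inside $V$ so that its tail beyond $|r|$ is below $2^{-|\alpha|-3}$, which manufactures a fixed gap below $1$. Everything else is routine bookkeeping, and the fact that $p_1$ is a condition is used only through $|\alpha|\ge|r|$.
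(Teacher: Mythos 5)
Your proof is correct and follows essentially the same route as the paper's: append a block of $1$'s to $\alpha$, choose the basic neighborhood inside $D$ with an artificially small tail (the paper arranges this by first shrinking $\delta$ below $2^{-|\alpha|-1}$ rather than via your auxiliary open set $V$, but it is the same device), take $\eta < 2^{-|\beta|}$, and verify the extension clause by splitting $\norm{B^k_{\beta^+}(x)}_2$ at position $|\beta|$, using the amplification bound $2^{|\alpha|}$ on the middle block and $2^{|\beta|}$ on the far tail. The paper runs the final estimate with the triangle inequality on norms instead of summing squares, but the content is identical.
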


\begin{proof}
Fix $p_1 = (\alpha , r , \delta)$, a dense open set $D \subseteq \ell^2$ and $N \in \omega$.  If necessary, extend $p_1$ by shrinking the neighborhood $U_{r , \delta}$ and assume that $\delta < 2^{-| \alpha | - 1}$.

Since $D$ is dense, choose a basic open set $U_{s , \eta}$ with
\[
\overline U_{s , \eta} \subseteq U_{r , \delta} \cap D.
\]
Note that
\[
B^{|\alpha |}_{\alpha^+} [ U_{r , \delta} ] \subseteq \{ z \in \ell^2 : \lVert z \rVert_2 < 1\}
\]
(by the definition of ``condition'') and hence
\[
B^{|\alpha |}_{\alpha^+} [ U_{s , \eta}] \subseteq \{ z \in \ell^2 : \lVert z \rVert_2 < 1\}
\]
as well.  Without loss of generality, assume that $|s| \geq |\alpha |$ and $\eta < 2^{-| \beta | - 1}$.  Let $\beta \in \{ 1 , 2 \}^{< \omega}$ have length greater than $\max \{ N , |s| \}$ and be of the form
\[
\alpha {}^\smallfrown 1 \, 1 \ldots 1.
\]
Now set $p_2 = (\beta , s , \eta)$.  To check that $p_2 < p_1$ it remains only to show that 
\[
B^k_{\beta^+} \left[ U_{s , \eta } \right] \subseteq \{ y \in \ell^2 : \lVert y \rVert_2 < 1\}
\]
for all $k \in [ | \alpha | , | \beta | )$.  Towards this end, fix $x \in U_{s , \eta}$.  First note that
\[
\lVert x \upharpoonright [ | \alpha | , \infty ) \rVert_2 < \delta < 2^{- | \alpha | - 1}
\]
since $x \in U_{s , \eta} \subseteq U_{r , \delta}$.  Second, observe that 
\[
\lVert x \upharpoonright [ | \beta |  , \infty ) \rVert_2 < \eta < 2^{- | \beta | - 1}
\]
by the definition of $U_{s , \eta}$, the choice of $\eta$ and the fact that $| \beta | > |s|$.  By the triangle inequality and the fact that $\beta^{-1} [\{ 2\}] = \alpha^{-1} [\{ 2 \}]$, it now follows that
\begin{align*}
\lVert B^k_{\beta^+} (x) \rVert_2
&\leq 2^{| \alpha |} \cdot \lVert x \upharpoonright [ | \alpha | , | \beta |) \rVert_2 
\  + \  2^{| \beta |} \cdot  \lVert x \upharpoonright [ | \beta | , \infty) \rVert_2\\
&< 2^{| \alpha |} \cdot 2^{- | \alpha | - 1} + 2^{| \beta |} \cdot 2^{- | \beta | - 1}\\
&= 1.
\end{align*}
This shows that $B^k_{\beta^+} \left[ U_{s , \eta} \right] \subseteq \{ y \in \ell^2 : \lVert y \rVert_2 < 1\}$ and completes the proof of the lemma.
\end{proof}

\begin{lemma}\label{nice conditions are dense}
Given a condition $p_1$, a nicely hypercyclic $y \in \ell^2$ and a basic neighborhood $U_{q , \varepsilon}$, there is a condition $p_2 = (\beta , s , \eta)$ such that $p_2 < p_1$ and $B^k_{\beta^+}$ maps $y$ nicely into $U_{q , \varepsilon}$ for some $k \leq | \alpha |$.
\end{lemma}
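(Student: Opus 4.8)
The plan is to reduce the statement to a single approximation fact about $y$ and then realize it by a finite extension of $\alpha$, finally shrinking the neighbourhood $U_{r,\delta}$ to obtain a legitimate extension of $p_1$. Write $p_1 = (\alpha , r , \delta)$ and let $m_\alpha = |\alpha^{-1}[\{2\}]|$ be the number of $2$'s occurring in $\alpha$. The only genuinely analytic step is to locate a position $k$ and a scaling exponent $m$ with
\[
\norm{2^{m}\,(y\restriction[k,k+|q|)) - q}_{\infty} < \varepsilon |q|^{-1/2}
\quad\text{and}\quad
\norm{y\restriction[k+|q|,\infty)}_{2} < \varepsilon 2^{-k},
\]
together with $k \geq |\alpha|$, $k \geq |q|$ and $m \geq m_\alpha$. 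Once such $k,m$ are in hand I will build $\beta \supseteq \alpha$ so that $\beta^{+}$ has exactly $m$ twos below $k$ and is $|q|$-nice at $k$; by the computation underlying Remark~\ref{cthreerem} this forces $B^{k}_{\beta^{+}}(y)(j) = 2^{m} y(k+j)$ for $j<|q|$, so the first displayed inequality is exactly condition (1) of Definition~\ref{mapsnicelydef}, the second is condition (\ref{cthree}), and the niceness is condition (2). Thus $B^{k}_{\beta^{+}}$ maps $y$ nicely into $U_{q,\varepsilon}$.

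To produce $k$ and $m$ I would exploit that $y$ is nicely hypercyclic, say via $w \in \{1,2\}^{\omega}$, and apply this to the rescaled targets $U_{2^{-T}q,\,2^{-T}\varepsilon}$ for large $T$. A witness $k$ for $U_{2^{-T}q,2^{-T}\varepsilon}$ gives $\norm{2^{m_k}(y\restriction[k,k+|q|)) - 2^{-T}q}_{\infty} < 2^{-T}\varepsilon|q|^{-1/2}$, where $m_k$ is the number of $2$'s of $w$ below $k$, together with $\norm{y\restriction[k+|q|,\infty)}_{2} < 2^{-T}\varepsilon 2^{-k}$; multiplying the first estimate by $2^{T}$ shows that $m := m_k + T$ and $k$ satisfy the two displayed inequalities. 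Since $m \geq T$, choosing $T \geq m_\alpha$ secures $m \geq m_\alpha$. The one point needing care is $k \geq |\alpha|$ (and $k \geq |q|$): because $y \in \ell^2$ has only finitely many coordinates above any threshold while $y(k+j) \approx 2^{-m}q(j) \to 0$ as $T \to \infty$, the witnessing positions must tend to infinity whenever $q$ is not identically $\bar 0$; the degenerate cases $q = \langle\,\rangle$ or $q = \bar 0$ are handled directly by taking $k$ large with $\norm{y\restriction[k,\infty)}_{2}$ sufficiently small. I expect the few real estimates to live here.

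With $k$ and $m$ fixed the construction of $\beta$ is routine combinatorics: keep $\beta\restriction|\alpha| = \alpha$, fill $[\,|\alpha|,k)$ with enough $2$'s (possible since $m \geq m_\alpha$ and $k$ is large) so that $\beta\restriction k$ has exactly $m$ twos, set $\beta$ on $[k,k+|q|)$ to copy $\beta\restriction|q|$ so that $\beta^{+}$ is $|q|$-nice at $k$, and put $|\beta| = k+|q|$. Note that it is precisely the extra $2$'s placed below $k$ that force the witnessing position to satisfy $|\alpha| \leq k < |\beta|$ with $k+|q|\leq|\beta|$ (so that the nice mapping is committed by the finite datum $\beta$ and survives all further extensions), rather than lying below $|\alpha|$.

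Finally I choose $s$ and $\eta$ so that $p_2 = (\beta , s , \eta) < p_1$, and this is where I expect the main obstacle. Unlike Lemma~\ref{avoid a comeager}, where only $1$'s are appended and the amplification of $B^{k'}_{\beta^{+}}$ over $[\,|\alpha|,|\beta|)$ never exceeds $2^{|\alpha|}$, our $\beta^{+}$ now carries up to $m+|q|$ twos, so the naive amplification bound blows up like $2^{|\beta|}$. I will absorb this by taking $s = r \,{}^\smallfrown \bar 0$ of length $|\beta|$, so that every $x \in U_{s,\eta}$ is pinned within $\eta|s|^{-1/2}$ of $0$ on all of $[\,|r|,|\beta|)$, and then taking $\eta$ so small that $\overline U_{s,\eta} \subseteq U_{r,\delta}$, that $\eta < 2^{-|\beta|}$, and that $2^{|\beta|}\norm{x\restriction[\,|\alpha|,\infty)}_{2} < 1$ for every $x \in U_{s,\eta}$ (the last bound holds once $\eta < 2^{-|\beta|-1}$, since then $\norm{x\restriction[\,|\alpha|,\infty)}_{2} < 2\eta$). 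Because every coordinate of $x$ entering $B^{k'}_{\beta^{+}}(x)$ for $k' \geq |\alpha|$ lies in $[\,|\alpha|,\infty)$, where $x$ has size $O(\eta)$, an amplification by at most $2^{|\beta|}$ still yields $\norm{B^{k'}_{\beta^{+}}(x)}_{2} < 1$ for all $k' \in [\,|\alpha|,|\beta|)$, which is the third clause of $p_2 < p_1$; the remaining clauses ($\alpha$ an initial segment of $\beta$, and $\overline U_{s,\eta} \subseteq U_{r,\delta}$) hold by construction, and $|\beta| \geq |s|$ together with $\eta < 2^{-|\beta|}$ make $p_2$ a condition. This completes the verification that $p_2 < p_1$ and that $B^{k}_{\beta^{+}}$ maps $y$ nicely into $U_{q,\varepsilon}$.
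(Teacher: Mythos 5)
Your proposal follows the same architecture as the paper's proof: produce a pair $(k,m)$ so that $2^{m}\bigl(y\restrict[k,k+|q|)\bigr)$ approximates $q$ and the tail of $y$ past $k+|q|$ is small, extend $\alpha$ to a string $\beta$ of length $k+|q|$ which has exactly $m$ twos below $k$ and is $|q|$-nice at $k$, and then shrink the neighborhood to get $p_2<p_1$; your verification of ``maps nicely'' via Remark~\ref{cthreerem} and your shrinking step (with $s=r^{\frown}\bar 0$ and $\eta<2^{-|\beta|-1}$) are both sound. The genuine gap is the step you label routine combinatorics. To put exactly $m$ twos below $k$ in a string extending $\alpha$ you need $m-m_{\alpha}\leq k-|\alpha|$, and with your choice $m=m_{k}+T$ (where $m_{k}$ is the number of twos of the witnessing sequence $w$ below $k$) this says that the number of \emph{ones} of $w$ below $k$ must be at least $T+(|\alpha|-m_{\alpha})$. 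Nothing in your argument secures this: largeness of $k$ does not control $k-m_{k}$, and since in your setup large $k$ is obtained only by increasing $T$, you would need the number of ones of $w$ below the scale-$T$ witness to grow at least linearly in $T$, which the definition of nice hypercyclicity does not provide. The failure is concrete: the constant sequence $w=\langle 2,2,2,\ldots\rangle$ satisfies $\NHC(w)\neq\emptyset$ (support $y$ on sparse blocks with $y\restrict[k_{n},k_{n}+|q_{n}|)=2^{-k_{n}}q_{n}$ and fast-decaying tails), and for such $y$ and $w$ your requirement becomes $m_{\alpha}\geq|\alpha|+T$, impossible as soon as $\alpha$ contains a single $1$. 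So in that case the string $\beta$ you describe does not exist.

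The paper avoids this by never rescaling: it takes a witness $k$ for $U_{q,\varepsilon}$ itself and makes the number of twos of $\beta$ below $k$ equal to $|v^{-1}[\{2\}]\cap k|$, the same count the witnessing sequence $v$ has, so that Remark~\ref{cthreerem} transfers niceness directly; the requirement $m\geq m_{\alpha}$ is then met not by inflating $m$ but by taking the witness $k$ large, using that any $v$ with $\NHC(v)\neq\emptyset$ has infinitely many twos (otherwise no vector is hypercyclic for $B_v$ at all). The room needed below $k$ is then a fixed finite amount (the paper's displayed inequality comparing ones of $v$ in $[|\alpha|,k)$ with twos of $v$ below $|\alpha|$), not an amount growing with $T$. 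To be fair, the paper's proof also tacitly assumes $v$ has infinitely many ones when it asserts that inequality can be arranged---your all-twos example shows this point needs an argument there too---but that is a fixed, bounded demand, whereas your $+T$ makes the demand unbounded and unfixable by the same means. If you want to keep the rescaling, you should fix $T=m_{\alpha}$ once and for all, obtain arbitrarily large witnesses at that fixed scale by shrinking $\varepsilon$ (a witness for $U_{2^{-T}q,\varepsilon'}$ with $\varepsilon'\leq 2^{-T}\varepsilon$ is one for $U_{2^{-T}q,2^{-T}\varepsilon}$), and still address the infinitely-many-ones issue. A separate minor point: your argument that the witnessing positions tend to infinity needs $\varepsilon$ first shrunk so that every nonzero coordinate of $q$ exceeds $\varepsilon|q|^{-1/2}$, since otherwise $y(k+j_{0})$ could be $0$ and the positions need not grow; the relevant fact is that the nonzero values of $y$ at positions below any fixed bound are bounded away from zero.
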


\begin{proof}
Write $p_1 = (\alpha , r , \delta)$ and fix $v \in \{ 1 , 2 \}^\omega$ such that $y \in \NHC (v)$.  Using part 3 of Lemma~\ref{avoid a comeager}, we may extend $\alpha$ if necessary and assume that $| \alpha | \geq | q |$.  Now let $k > | \alpha |$ be such that $B^k_v$ maps $y$ nicely into $U_{q , \varepsilon}$, i.e., 
\begin{itemize}
\item $B^k_v (y) \in U_{q , \varepsilon}$,
\item $v \upharpoonright | q  |$ is an initial segment of $v \upharpoonright [k , \infty)$, and
\item $\lVert y \upharpoonright [ k + |q| , \infty ) \rVert_2 < \varepsilon 2^{-k}$.
\end{itemize}
Since there are arbitrarily large such $k$, it is safe to assume that $k$ is large enough to guarantee
\[
\left| v^{-1} [ \{ 1 \} ] \cap [ | \alpha | , k ) \right| \geq \left| v^{-1} [ \{ 2 \} ] \cap | \alpha | \right|.
\]
In turn, this means that $\alpha$ can be extended to $\alpha_1$ with the property that
\[
\left| \alpha_1^{-1} [ \{ 2 \} ] \cap k \right|  = \left| v^{-1} [ \{ 2 \} ] \right|.
\]
Let $\beta = \alpha_1 {}^\smallfrown \left( \alpha \upharpoonright | q | \right)$.  Since $B^k_v$ maps $y$ nicely into $U_{q , \varepsilon}$, it follows that $B^k_{\beta^+}$ does as well.

Now let $\eta = \min\left\{ \frac{\delta}{2} , 2^{-| \beta | - 1}\right\}$ and take
\[
p_2 = ( \beta , r , \eta).
\]
It follows that $p_2$ is a condition and $p_2 < p_1$.
\end{proof}

\vspace{5em}

Equipped with the lemmas above, the next step is to complete the proof of Theorem~\ref{non baire under ch}.

\begin{proof}[Theorem~\ref{non baire under ch}]
Assume that Martin's Axiom holds.  Let $\langle C_a : a < \mathfrak c \rangle$ enumerate all dense $G_\delta$ subsets of $\ell^2$.  The goal is to choose $y_a \in \ell^2$ and $w_a \in \{ 1 , 2 \}^\omega$ (for $a < \mathfrak c$) such that 
\begin{itemize}
\item $\HC (w_a) \supseteq \{ y _b : b < a \}$
\item $y_a \in \left( \bigcap_{b \leq a} \HC (w_b) \right) \cap C_a \cap {\sf NH}$
\item $C_a \setminus \HC (w_a) \neq \emptyset$
\end{itemize}
Before showing how to find such $w_a$ and $y_a$, the first step is to verify that the conditions above are sufficient to guarantee the existence of a set of common hypercyclic vectors without the property of Baire.  Indeed, let
\[
W = \{ w_a : a < \mathfrak c\}.
\]
Notice that the first and second conditions above show that $y_a \in \HC (w_b)$ for all $a,b < \mathfrak c$ and so
\[
\HC^* (W) \supseteq \{ y_a : a < \mathfrak c\}.
\]
Since $\HC^* (W)$ is closed under changes in finitely many coordinates, if $\HC^* (W)$ has the Baire property then it is either meager or comeager.  Therefore, it is sufficient to show that 
\begin{itemize}
\item $\HC^* (W)$ intersects every dense $G_\delta$ subset of $\ell^2$ (i.e., it is non-meager) and
\item $\HC^* (W)$ contains no dense $G_\delta$ subset of $\ell^2$ (i.e., it is not comeager).
\end{itemize}
The first of these statements is witnessed by the fact that $y_a \in C_a \cap \HC^* (W)$ for every $a < \mathfrak c$ (see the second condition above).  The second is a consequence of the third condition above, i.e., $C_a \setminus \HC (w_a) \neq \emptyset$ for all $a < \mathfrak c$.

The construction of the $y_a$ and $w_a$ proceeds in stages $a < \mathfrak c$.  Suppose that $y_b \in \ell^2$ and $w_b \in \{ 1 , 2 \}^\omega$ are given for all $b < a$ and satisfy the properties above.  Let $D_0 \supseteq D_1 \supseteq \ldots$ be dense open sets such that $C_a = \bigcap_n D_n$.

Let $(\mathbb P , <)$ be the set of conditions $(\alpha , r , \delta)$ ordered by the extension relation $<$.  First of all, $\mathbb P$ is ccc since it is countable, so Martin's Axiom applies.

For each $b < a$ and basic neighborhood $U_{q , \varepsilon}$, consider the set of conditions
\[
E_{b , q , \varepsilon} = \left\{ (\alpha , r , \delta)  \in \mathbb P : \left(\exists k \leq | \alpha |\right) \left(B^k_{\alpha^+} \mbox{ maps $y_b$ nicely into } U_{q , \varepsilon} \right) \right\}.
\]
It follows from Lemma~\ref{nice conditions are dense} that each $E_{b , q , \varepsilon}$ is dense.  For each $n \in \omega$ and $p \in \mathbb P$, define
\begin{align*}
F_{n , p} = \big\{ (\alpha , r , \delta) \in \mathbb P &: \left(  (| \alpha | \geq n) \wedge ( (\alpha , r , \delta) < p) \wedge U_{r , \delta} \subseteq D_n \right) \\
&\mbox{ or } (\alpha , r , \delta ) \mbox{ and } p \mbox{ have no common extensions}\big\}.
\end{align*}
Lemma~\ref{avoid a comeager} implies that each $F_{n , p}$ is dense in $\mathbb P$.

It is now possible to apply ${\rm MA} (|a|)$ to obtain a filter $G \subseteq \mathbb P$ such that $G$ has nonempty intersection with all of the dense sets above.  Since $G$ is a filter, if $(\alpha , r , \delta) , (\beta , s , \eta) \in G$, then either $\alpha \subseteq \beta$ or $\beta \subseteq \alpha$ as otherwise $G$ contains incompatible elements.  It thus follows that the weight sequence
\[
w_a = \bigcup \left\{ \alpha : (\exists r , \delta) ( (\alpha , r , \delta) \in G) \right\}
\]
is well-defined.

\noindent {\em Claim.}  $C_a \setminus \HC (w_a) \neq \emptyset$.

To prove this claim, choose a descending sequence $p_0 > p_1 > \ldots$ of conditions in $G$ by induction as follows:
\begin{itemize}
\item $p_0 \in G$ is arbitrary
\item Given $p_0 > p_1 > \ldots > p_n$, choose $p_{n+1} \in G \cap F_{n , p_n}$
\end{itemize}
Note that $p_{n+1}$ will be an extension of $p_n$ as otherwise $p_{n+1}$ and $ p_n$ have no common extensions (by the definition of $F_{n , p_n}$), but this cannot happen since $G$ is a filter.

For each $n$, suppose $p_n = (\alpha_n , r_n , \delta_n)$.  It follows that there exists $x \in \ell^2$ with
\[
x \in \bigcap_n \overline U_{r_n , \delta_n} = \bigcup_n U_{r_n , \delta_n} \subseteq \bigcap_n D_n = C_a
\]
where the ``$\subseteq$'' is a consequence of the fact that $p_{n+1} \in F_{n , p_n}$ for each $n$.  On the other hand, if $k \geq | \alpha_0 |$ with $n$ such that $k \in [ | \alpha_{n-1} , | \alpha_n | )$ (such an $n$ exists since $|\alpha_n| \geq n$).  By the choice of $p_n$, it follows that
\[
B^k_{\alpha_n^+} (x) \in \left\{ z \in \ell^2 : \lVert z \rVert_2 < 1\right\}.
\]
and hence
\[
B^k_{w_a} (x) \in \left\{ z \in \ell^2 : \lVert z \rVert_2 < 1\right\}
\]
since $\alpha_n$ is an initial segment of $w_a$.  In particular, $x \in C_a \setminus \HC (w_a)$ which proves the claim.

\noindent {\em Claim.}  For each $b < a$, $y_b \in \HC (w_a)$.

To establish this claim, fix a basic open set $U_{q , \varepsilon}$ and let $\alpha \subseteq w_a$ and $r, \delta$ be such that
\[
(\alpha , r , \delta ) \in G \cap E_{b , q , \varepsilon}.
\]
Hence, by the definition of $E_{b , q , \varepsilon}$, there exists $k \leq | \alpha |$ such that $B^k_{\alpha^+}$ maps $y_b$ nicely into $U_{q , \varepsilon}$.  Since $\alpha$ is an initial segment of $w_a$ and $k \leq |\alpha |$, it follows that $B^k_w$ also maps $y_b$ nicely into $U_{q , \varepsilon}$.  As $q, \varepsilon$ were arbitrary, this proves the claim.

To complete the construction, choose $y_a$ to be any element of
\[
\bigg( \bigcap_{b \leq a} \HC (w_b) \bigg) \cap C_a \cap {\sf NH}.
\]
Note that this set is nonempty by Martin's Axiom as it is an $|a|$-size intersection of comeager sets.
\end{proof}

\begin{remark}
Since all relevant partial orders in the proof above were countable, it would have been sufficient to make the weaker assumption that any $<\mathfrak c$-size intersection of comeager sets is nonempty.  This statement, known as ``${\sf non} (\mathcal M) < \mathfrak c$'' is an immediate consequence of Martin's Axiom.
\end{remark}

\end{document}